\def\setliststart#1{\setcounter{\@listctr}{#1}%
  \addtocounter{\@listctr}{-1}}
 \newtheorem{The}{Theorem}[section]
 \newtheorem{Lem}[The]{Lemma}
 \newtheorem{Pro}[The]{Proposition}
 \theoremstyle{definition}
 \newtheorem{defn}[The]{Definition}
 \theoremstyle{remark}
 \newtheorem{Rem}[The]{Remark}
 \numberwithin{equation}{section}
\newcommand{\R}{\mathbb{R}}
\newcommand{\N}{\mathbb{N}}
\title{Lasry-Lions, Lax-Oleinik and Generalized characteristics}
\author{Cui Chen \and Wei Cheng}
\address{Department of Mathematics, Nanjing University,
Nanjing 210093, China}
\email{chenc@ujs.edu.cn}
\address{Department of Mathematics, Nanjing University,
Nanjing 210093, China}
\email{chengwei@nju.edu.cn}
\date{\today}
\subjclass[2010]{26B25, 35A21, 49L25, 37J50, 70H20}
\keywords{Semiconcave functions, singularities, Hamilton-Jacobi equations,  weak KAM theory, critical point theory.}
\begin{document}
\maketitle

\begin{abstract}
	In the recent works \cite{Cannarsa-Chen-Cheng} and \cite{Cannarsa-Cheng3}, an intrinsic approach of the propagation of singularities along the generalized characteristics was obtained, even in global case, by a procedure of sup-convolution with the kernel the fundamental solutions of the associated Hamilton-Jacobi equations. 
		
	In the present paper, we exploit the relations among Lasry-Lions regularization, Lax-Oleinik operators (or inf/sup-convolution) and generalized characteristics, which are discussed in the context of the variational setting of Tonelli Hamiltonian dynamics, such as Mather theory and weak KAM theory.
\end{abstract}

\section{Introduction}
Suppose $H=H(x,p):\R^n\times\R^n\to\R$ is a $C^2$ Tonelli Hamiltonian, i.e., $H$ is convex in $p$ with superlinear growth condition. Let $u:\R^n\to\R$ be a (global) viscosity solution of the Hamiltonia-Jacobi equation
\begin{equation}\label{eq:intro_HJ_1}
	H(x,Du(x))=0,\quad x\in\R^n.
\end{equation}
Such a solution $u$ is locally semiconcave (with linear modulus) on $\R^n$. We denote by $D^+u(x)$ the superdifferential of $u$ at $x$ (see, for instance, \cite{Cannarsa-Sinestrari}), which is a compact convex set in $\R^n$, and we call $x\in\R^n$ a singular point of $u$ if $D^+u(x)$ is not a singleton. Certain ``singular dynamics'' was interpreted in \cite{Albano-Cannarsa} by a Hamiltonian inclusion
$$
\dot{\mathbf{x}}(s)\in\text{co}\, H_p(\mathbf{x}(s),D^+u(\mathbf{x}(s))),\quad a.e.\,s\in[0,\tau],
$$
and such a Lipschictz arc $\mathbf{x}$ is called a {\em generalized characteristic}. If $x_0$ is a singular point of $u$ and 
\begin{equation}\label{eq:intro_non_critical}
	0\not\in\text{co}\,H_p(x_0,D^+u(x_0)),
\end{equation}
then the associated generalized characteristic $\mathbf{x}(t)$, $t\in[0,\tau]$, is composed of singular points of $u$. In the recent works \cite{Cannarsa-Chen-Cheng} and \cite{Cannarsa-Cheng3}, the propagation of singularities along generalized characteristics in \cite{Cannarsa-Cheng3} has been explained by an intrinsic variational approach (see, also, \cite{Albano-Cannarsa} \cite{acns} \cite{Ambrosio-Cannarsa-Soner} \cite{Cannarsa-Cheng} \cite{Cannarsa-Sinestrari} \cite{Cannarsa-Yu} \cite{Yu} for the approach from Control theory or PDE), which is motivated by Mather theory and weak KAM theory. 

Let us recall the aforementioned results in \cite{Cannarsa-Chen-Cheng} and \cite{Cannarsa-Cheng3} at first. Let $u\in C(\R^n)$, for any $t>0$, $\breve{T}_t$, the Lax-Oleinik operator of positive type, is defined as 
\begin{equation}\label{eq:intro_sup_convolution}
	\breve{T}_tu(x):=\sup_{y\in\R^n}\{u(y)-A_t(x,y)\}:=\sup_{y\in\R^n}\psi_t(y),\quad x\in\R^n,
\end{equation}
where 
$$
A_{t}(x,y)=\inf_{\gamma\in\Gamma^t_{x,y}}\int^{t}_{0}L(\gamma(s),\dot{\gamma}(s))ds
$$
with
$$
\Gamma^t_{x,y}=\{{\gamma\in W^{1,1}([0,t],\R^n): \gamma(0)=x,\gamma(t)=y}\}.
$$
Here $L$ is an arbitrary Tonelli Lagrangian on $\R^n$ with $H$ its Fenchel-Legendre dual, and it is well known that \eqref{eq:intro_sup_convolution} is also called sup-convolution or Lax-Oleinik opterators in the literature. In \cite{Cannarsa-Chen-Cheng} and \cite{Cannarsa-Cheng3}, the authors shown that the maximizers in such a procedure of sup-convolution give exactly a local or global generalized characteristic starting from a singular points of $u$ under suitable conditions. 

In the present paper, we will explain the connection between generalized characteristics and the well-known Lasry-Lions regularization at first. Throughout this paper, we suppose that $L$ satisfies condition (L1) and (L2) (see Section 2).

Let $M$ be a $C^2$ closed manifold, $t>0$ and $u:M\to\R$ is any semiconcave function, the following properties are already known (see, for instance, \cite{Bernard2007}, \cite{Fathi-book})
\begin{enumerate}[(P1)]
  \item $\breve{T}_tu$ belongs to class $C^{1,1}$ for $0<t\leqslant t_0$ with $t_0$ is a constant dependent on the constant of semiconcavity of $u$.
  \item $\breve{T}_tu$ is decreasing on $(0,+\infty)$ if $u$ is a viscosity subsolution of Hamilton-Jacobi equation
$$
H(x,Du(x))=\alpha(0),\quad x\in\ M,
$$
where $\alpha(\cdot)$ is Mather's $\alpha$-function. Moreover, $\breve{T}_tu$ tends to $u$ uniformly as $t\to 0^+$.
\end{enumerate}

In this paper, we also have

\begin{The}\label{LL-Regularization}
Suppose $u:\R^n\to\R$ is a semiconcave function. Then there exists $0<t_0\ll 1$ such that 
\begin{enumerate}[(P1)]\setliststart{3}
  \item Let $x_0\in\R^n$ and $L(x_0,0)\leqslant 0$, then $\breve{T}_tu(x_0)$ is increasing on $(0,+\infty)$ and $\lim_{t\to 0^+}\breve{T}_tu(x_0)=u(x_0)$. Consequently, if 
  $$
  L(x,0)\leqslant 0,\quad \forall x\in\R^n,
  $$
  then $\breve{T}_tu$ tends to $u$ uniformly on any compact subset as $t\to 0^+$.
  \item Let $x\in\R^n$, suppose that the function $\psi_t$ defined in \eqref{eq:intro_sup_convolution} attains the maximizer $y_t$ in $B(x,R(x,t))$ for all $t>0$ where $R(x,t)>0$ is defined in \eqref{eq:key}, then $\lim_{t\to0^+}D\breve{T}_tu(x)=p_0$, where $p_0$ is the unique element with minimal energy:
  \begin{equation}\label{eq:ME}
	H(x_0,p)\geqslant H(x_0,p_0),\quad \forall p\in D^+u(x_0).\tag{ME}
  \end{equation}
\end{enumerate}
\end{The}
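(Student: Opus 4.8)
The plan is to treat the two assertions (P3) and (P4) separately. For (P3) I would begin from the semigroup identity $\breve{T}_tu=\breve{T}_{t-s}\,\breve{T}_su$ for $0<s<t$ (a direct consequence of the Markov identity $A_t(x,y)=\inf_z\{A_{t-s}(x,z)+A_s(z,y)\}$), and use the constant curve at $x_0$ as a competitor: since $A_{t-s}(x_0,x_0)\leqslant (t-s)L(x_0,0)\leqslant 0$,
\[
\breve{T}_tu(x_0)=\sup_z\{\breve{T}_su(z)-A_{t-s}(x_0,z)\}\geqslant\breve{T}_su(x_0)-A_{t-s}(x_0,x_0)\geqslant\breve{T}_su(x_0),
\]
so $t\mapsto\breve{T}_tu(x_0)$ is non-decreasing and $\lim_{t\to0^+}\breve{T}_tu(x_0)=\inf_{t>0}\breve{T}_tu(x_0)$ exists. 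Choosing $y=x_0$ in \eqref{eq:intro_sup_convolution} shows this infimum is $\geqslant u(x_0)$; conversely the standard short-time bound $A_t(x_0,y)\geqslant tL(x_0,\frac{y-x_0}{t})-o(t)$, uniform superlinearity (L2) and local boundedness of the semiconcave $u$ force the supremum in \eqref{eq:intro_sup_convolution} to be asymptotically attained for $|y-x_0|=O(t)$, where $u(y)-A_t(x_0,y)\leqslant u(x_0)+o(1)$; hence $\inf_{t>0}\breve{T}_tu(x_0)\leqslant u(x_0)$, and equality follows. The final assertion is then Dini's theorem: when $L(\cdot,0)\leqslant 0$ on a compact set, $\{\breve{T}_tu\}_{0<t<t_0}$ is a family of continuous functions (by (P1)) decreasing monotonically, as $t\downarrow0$, to the continuous function $u$.

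For (P4), write $x_0:=x$ and $c_0:=\min_{p\in D^+u(x_0)}H(x_0,p)=H(x_0,p_0)$; note $p_0$ is the \emph{unique} minimiser since $H(x_0,\cdot)$ is strictly convex and $D^+u(x_0)$ is convex and compact, so it suffices to show every accumulation point of $D\breve{T}_tu(x_0)$ as $t\to0^+$ lies in $D^+u(x_0)$ and has energy $c_0$. Fix $0<t<t_0$. By (P1) $\breve{T}_tu$ is $C^{1,1}$, by hypothesis the maximiser $y_t$ lies in $B(x_0,R(x_0,t))$, and I would first check that \eqref{eq:key} provides the quantitative bound $R(x_0,t)=O(t)$ (so that $y_t\to x_0$ and $R(x_0,t)^2=o(t)$). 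Let $\gamma_t:[0,t]\to\R^n$ be the minimiser defining $A_t(x_0,y_t)$ and $p_t(s):=L_v(\gamma_t(s),\dot\gamma_t(s))$ its dual arc, so $(\gamma_t,p_t)$ solves Hamilton's equations with conserved energy $E_t:=H(\gamma_t(s),p_t(s))$. Two classical facts feed in: differentiating \eqref{eq:intro_sup_convolution} at $x_0$ gives $D\breve{T}_tu(x_0)=-D_x A_t(x_0,y_t)=p_t(0)$, while first-order optimality of $y_t$ for $y\mapsto u(y)-A_t(x_0,y)$ (using $C^1$-regularity of $A_t(x_0,\cdot)$ near $y_t$) gives $p_t(t)=D_y A_t(x_0,y_t)\in D^+u(y_t)$; hence $E_t=H(x_0,D\breve{T}_tu(x_0))=H(y_t,p_t(t))$. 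Since $|y_t-x_0|=O(t)$, the minimisers have average speed $O(1)$, so the Tonelli a priori estimates bound $\|\dot\gamma_t\|_\infty$ and (via Euler--Lagrange) $\|\ddot\gamma_t\|_\infty$ uniformly for $0<t<t_0$; consequently $\{D\breve{T}_tu(x_0)\}_t$ is bounded, $|p_t(0)-p_t(t)|=O(t)\to0$, and, by closedness of the graph of $D^+u$ and $y_t\to x_0$, every accumulation point of $D\breve{T}_tu(x_0)$ belongs to $D^+u(x_0)$.

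It remains to identify the energy. Here I would establish two matching $o(t)$-estimates for $\breve{T}_tu(x_0)$. The upper one combines $A_t(x_0,x_0+h)\geqslant tL(x_0,h/t)-o(t)$, the superdifferential inequality $u(x_0+h)\leqslant u(x_0)+\min_{p\in D^+u(x_0)}\langle p,h\rangle+C_0|h|^2$, and the Fenchel inequality $\langle p,h\rangle-tL(x_0,h/t)\leqslant tH(x_0,p)$: taking the supremum over $|h|\leqslant R(x_0,t)=O(t)$ gives $\breve{T}_tu(x_0)\leqslant u(x_0)+tc_0+o(t)$. For the lower one, writing $L(\gamma_t,\dot\gamma_t)=\langle p_t,\dot\gamma_t\rangle-E_t$ and integrating by parts along the extremal $(\gamma_t,p_t)$ (using the a priori bounds and $|\gamma_t(s)-x_0|=O(t)$) yields $A_t(x_0,y_t)=\langle p_t(t),y_t-x_0\rangle-tE_t+O(t^2)$; combining this with $\breve{T}_tu(x_0)=u(y_t)-A_t(x_0,y_t)$ and the superdifferential inequality $u(y_t)-\langle p_t(t),y_t-x_0\rangle\geqslant u(x_0)-C_0|y_t-x_0|^2$ gives $\breve{T}_tu(x_0)\geqslant u(x_0)+tE_t-o(t)$. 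Comparing the two estimates yields $E_t\leqslant c_0+o(1)$, so $\limsup_{t\to0^+}H(x_0,D\breve{T}_tu(x_0))\leqslant c_0$; since each accumulation point $\bar p$ of $D\breve{T}_tu(x_0)$ lies in $D^+u(x_0)$ and hence satisfies $H(x_0,\bar p)\geqslant c_0$, we get $H(x_0,\bar p)=c_0$, whence $\bar p=p_0$ by strict convexity of $H(x_0,\cdot)$. As $\{D\breve{T}_tu(x_0)\}_t$ is bounded with $p_0$ its only accumulation point, $\lim_{t\to0^+}D\breve{T}_tu(x_0)=p_0$.

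The main obstacle I expect is making the two estimates for $\breve{T}_tu(x_0)$ genuinely $o(t)$-precise: this rests on the uniform-in-$t$ Tonelli bounds for the minimisers $\gamma_t$ as $t\to0^+$ — which in turn require the quantitative control $R(x_0,t)=O(t)$ from \eqref{eq:key}, so that the semiconcavity errors $C_0R(x_0,t)^2$ are indeed $o(t)$ — together with keeping the moduli of continuity of $L,H,L_v,u$ under control as $y_t\to x_0$. A secondary point to treat carefully is the envelope identity $D\breve{T}_tu(x_0)=p_t(0)$ and the inclusion $p_t(t)\in D^+u(y_t)$, which rely on the $C^{1,1}$-regularity furnished by (P1) and on the local $C^1$-regularity of $A_t(x_0,\cdot)$ near the diagonal for small $t$.
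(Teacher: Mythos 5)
Your treatment of (P3) is essentially the paper's: the paper inserts the constant curve at $x_0$ directly into $A_t(x_0,y)\leqslant A_{t-s}(x_0,x_0)+A_s(x_0,y)$, which is exactly the one-sided Markov inequality underlying your semigroup identity, and then invokes Dini's lemma for locally uniform convergence; your added sketch of the pointwise limit $\breve{T}_tu(x_0)\to u(x_0)$ (which the paper does not actually write out) is fine in spirit, but be aware that for a general semiconcave $u$ on $\R^n$ the claim that the supremum is asymptotically attained at distance $O(t)$ needs an attainment or compactness hypothesis of the kind made explicit in (P4), since $u$ may grow quadratically while (L2) only gives superlinear growth of $L$. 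For (P4) your route is genuinely different at the key step. Like the paper, you establish $D\breve{T}_tu(x)=p_t(0)$, the inclusion $p_t(t)=D_yA_t(x,y_t)\in D^+u(y_t)$, and the a priori bounds (Propositions \ref{Main_bound_Lem}, \ref{equi_Lip}, \ref{C11_A_t}, with $R(x,t)=t/2$ from \eqref{eq:key}) giving $p_t(0)-p_t(t)=O(t)$ and accumulation points in $D^+u(x)$. But you then identify the energy through two matching expansions, $\breve{T}_tu(x)\leqslant u(x)+t\min_{p\in D^+u(x)}H(x,p)+o(t)$ and $\breve{T}_tu(x)\geqslant u(x)+tE_t-o(t)$ (the latter by integration by parts along the extremal), concluding $H(x,\bar p)=c_0$ for every accumulation point $\bar p$ and hence $\bar p=p_0$ by strict convexity of $H(x,\cdot)$ on the compact convex set $D^+u(x)$. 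The paper never expands $\breve{T}_tu$: it applies the semiconcavity monotonicity inequality $\langle p-L_v(\xi_t(t),\dot{\xi}_t(t)),v_t\rangle+tC|v_t|^2\geqslant 0$ for all $p\in D^+u(x)$, with $v_t=(y_t-x)/t$, uses the equi-Lipschitz estimate to pass to the limit $v_t\to v_0$, and reads off via Fenchel's inequality that $H(x,p)\geqslant\langle L_v(x,v_0),v_0\rangle-L(x,v_0)=H(x,p_0)$ with $p_0=L_v(x,v_0)\in D^+u(x)$, i.e.\ it obtains the variational inequality characterizing the minimizer directly, with no $o(t)$ bookkeeping. Both arguments are correct and rest on the same appendix estimates; yours is longer but yields the expansion $\breve{T}_tu(x)=u(x)+tH(x,p_0)+o(t)$ as a by-product, while the paper's is shorter, though it tacitly extracts the limit $v_0$ by compactness before identifying it uniquely --- the same subsequence-plus-uniqueness step you make explicit.
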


It is worth noting that the minimal energy condition in \eqref{eq:ME} is the same as the inititial condition on the velocity of the generalized characteristic obtained by the intrinsic approach in \cite{Cannarsa-Chen-Cheng} and \cite{Cannarsa-Cheng3}, see also Proposition \ref{sing_arc_gen_char}.

In the rest part of this paper, we try to exploit the nature of the singularities of $u$ by the procedure of inf-convolution. As pointed out in \cite{Cannarsa-Chen-Cheng}, the inf-convolution defined by
\begin{equation}\label{eq:intro_inf_convolution}
	T_tu(x):=\inf_{y\in\R^n}\{u(y)+A_t(y,x)\},\quad x\in\R^n,
\end{equation}
is not the dual procedure of sup-convolution. But, it is still meaningful for study the critical points of the local barrier function
$$
\phi_t(x)=u(x)+A_t(x,x_0)
$$
with respect to any fixed point $x_0$. Recall that a point $x\in\R^n$ is a {\em critical point} of a locally semiconcave function $u$ if $0\in D^+u(x)$. Comparing to the local barrier function
$$
\psi_t(x)=u(x)-A_t(x_0,x),
$$
the function $\psi_t$ only admits a unique critical point (maximizer) for small time $t>0$ since the convexity properties of the fundamental solutions $A_t(x_0,x)$ (see, Appendix A).

Along this line, given a singular point $x$ of $u$, using a nonsmooth critical point theorem by Shi (\cite{Shi}), we obtain a critical point of the local semiconcave function $\phi_t$, which is not a global minimizer of $\phi_t$ determined by classical characteristic passing to $x$.

\begin{The}\label{intro:thm2}
	Let $u$ be a Lipschitz viscosity solution of \eqref{eq:intro_HJ_1}, $t>0$, and let $x\in\R^n$ be a singular point of $u$. Suppose there exist finite many elements in $D^*u(x)$, say $D^*u(x)=\{p_1,\ldots,p_k\}$ with $k\geqslant2$, then there exist critical points $\{x^{ij}_t\}$ of $\phi_t$ (not global minimizers) such that, for $1\leqslant i,j\leqslant k$, $i\not= j$, each critical point $x_t=x^{ij}_t$ has the following dichotomy: 
\begin{enumerate}[(a)]
  \item $x_t$ is a differentiable point of $\phi_t$ and there exists a local minimal curve connecting $x_t$ and $x$. More precisely, there exists a $C^1$ curve $\gamma:(-\infty,t]\to\R^n$ such that $\gamma(0)=x_t$, $\gamma(t)=x$ and the restriction of $\gamma$ on $(-\infty,0]$ is a $(u,L,0)$-calibrated curve, but $\gamma$ is not a $(u,L,0)$-calibrated curve on $(-\infty,t]$;
  \item $x_t$ is a singular point of $u$.
\end{enumerate}
\end{The}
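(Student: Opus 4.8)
The plan is to realize each $x^{ij}_t$ as a mountain-pass critical point of the local barrier function $\phi_t(y)=u(y)+A_t(y,x)$, and then to read off the dichotomy from the regularity of $\phi_t$ at that point.

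\emph{Step 1: the geometry of $\phi_t$.} Since $u$ is locally semiconcave and, for fixed $t>0$, so is $A_t(\cdot,x)$, the function $\phi_t$ is locally semiconcave (with linear modulus on compact sets); and since $u$ is globally Lipschitz while $A_t(\cdot,x)$ grows superlinearly in $|y-x|$, the function $\phi_t$ is coercive, $\phi_t(y)\to+\infty$ as $|y|\to\infty$. Because $u$ is a weak KAM (viscosity) solution of \eqref{eq:intro_HJ_1}, the fundamental inequality gives $u(x)\leqslant u(y)+A_t(y,x)$ for all $y$, i.e. $\min_{\R^n}\phi_t=u(x)$, with equality at $y$ precisely when the minimizer joining $y$ to $x$ is $(u,L,0)$-calibrated. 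Such calibrated curves arriving at $x$ are Hamiltonian trajectories with terminal momentum in $D^*u(x)=\{p_1,\dots,p_k\}$; denoting by $\gamma_i\colon(-\infty,t]\to\R^n$ the one with $\gamma_i(t)=x$ and $L_v(\gamma_i(t),\dot\gamma_i(t))=p_i$, the base points $y_i:=\gamma_i(0)$ are distinct (along a calibrated curve $u$ is differentiable before the right endpoint, so $Du(y_i)$ equals the initial momentum of $\gamma_i$, and these differ for distinct $i$), and $\{y_1,\dots,y_k\}$ is exactly the set of global minimizers of $\phi_t$, hence a finite, isolated set.

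\emph{Step 2: the minimax.} Fix $i\neq j$. Since $y_i$ is an isolated global minimum of the coercive continuous function $\phi_t$, there are $\rho>0$ and $\alpha>u(x)$ with $\phi_t\geqslant\alpha$ on $\partial B(y_i,\rho)$ and $y_j\notin B(y_i,\rho)$, so the mountain-pass level
\[
c_{ij}:=\inf_{\sigma}\max_{\theta\in[0,1]}\phi_t(\sigma(\theta)),
\]
the infimum over continuous paths $\sigma$ from $y_i$ to $y_j$, satisfies $c_{ij}\geqslant\alpha>u(x)=\min\phi_t$. Coercivity provides the Palais--Smale type compactness condition, so the nonsmooth mountain pass theorem of Shi applies to the locally semiconcave $\phi_t$ and yields a critical point $x_t=x^{ij}_t$ (that is, $0\in D^+\phi_t(x_t)$) with $\phi_t(x_t)=c_{ij}$; since $c_{ij}>\min\phi_t$, the point $x^{ij}_t$ is not a global minimizer. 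Letting $(i,j)$ range over all pairs produces the family $\{x^{ij}_t\}$.

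\emph{Step 3: the dichotomy.} For semiconcave functions $D^+\phi_t(x_t)=D^+u(x_t)+D^+_yA_t(x_t,x)$, and a Minkowski sum of compact convex sets is a singleton iff each summand is. If $\phi_t$ is differentiable at $x_t$, then so are $u$ and $A_t(\cdot,x)$; letting $\eta\colon[0,t]\to\R^n$ be the unique minimizer from $x_t$ to $x$, the identity $D_yA_t(x_t,x)=-L_v(\eta(0),\dot\eta(0))$ together with $D\phi_t(x_t)=0$ forces the initial momentum of $\eta$ to be $Du(x_t)$. Let $\gamma\colon(-\infty,t]\to\R^n$ be the projection to $\R^n$ of the Hamiltonian trajectory with $\gamma(0)=x_t$ and initial momentum $Du(x_t)$: on $[0,t]$ it coincides with $\eta$ (same Cauchy data), so $\gamma(t)=x$; on $(-\infty,0]$, since $H(x_t,Du(x_t))=0$ and $u$ is a negative-type weak KAM solution, it is the backward $(u,L,0)$-calibrated curve through $x_t$; and it is not calibrated on all of $(-\infty,t]$ — otherwise $u(x)-u(x_t)=\int_0^tL(\gamma,\dot\gamma)\,ds=A_t(x_t,x)$ would give $\phi_t(x_t)=u(x)<c_{ij}$. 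This is alternative (a). If instead $\phi_t$ is not differentiable at $x_t$, the sum $D^+u(x_t)+D^+_yA_t(x_t,x)$ is not a singleton, and one concludes that $D^+u(x_t)$ is not a singleton, i.e. $x_t\in\mathrm{Sing}(u)$, which is alternative (b).

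\emph{Main obstacle.} Two points require care. First, the correct nonsmooth framework in Step 2: one must check that Shi's theorem (or a deformation lemma tailored to locally semiconcave functions) applies with the compactness condition supplied by coercivity, and that the resulting $x^{ij}_t$ genuinely links the two valleys, so that as a mountain-pass point it is not a strict local maximum of $\phi_t$, equivalently $0\in\partial D^+\phi_t(x_t)$. Second, and this is the heart of the matter, one must show the dichotomy is exhaustive, i.e. rule out that $u$ is differentiable at $x_t$ while $A_t(\cdot,x)$ is not (so $\phi_t$ would be nonsmooth at $x_t$ with $x_t\notin\mathrm{Sing}(u)$). For this one exploits that $D^+_yA_t(x_t,x)$ is the convex hull of the initial momenta of the Tonelli minimizers from $x_t$ to $x$, that the energy of such a minimizer equals $H(x_t,\cdot)$ at its initial momentum, and the strict convexity of the Tonelli Hamiltonian together with $0\in\partial D^+\phi_t(x_t)$; this should force the minimizer from $x_t$ to $x$ to be unique, hence $\phi_t$ differentiable at $x_t$ and alternative (a) in force.
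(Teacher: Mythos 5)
Your proposal reproduces the paper's own route: your Step~1 is Lemma~\ref{one_to_one} (the correspondence between $D^*u(x)$ and the global minimizers of $\phi_t$), your Step~2 is Lemma~\ref{nonsmooth_crtical_point} together with Shi's nonsmooth mountain pass theorem (Proposition~\ref{nonsmooth_MPT}), and your Step~3 handling of case (a) — pasting the backward calibrated curve through $x_t$ with the Tonelli minimizer from $x_t$ to $x$ and noting that the resulting extremal cannot be globally calibrated — is exactly the argument in the paper's proof.

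You have, however, correctly put your finger on a point the paper's proof also leaves open. The theorem asserts a dichotomy, (a) $\phi_t$ differentiable at $x_t$ or (b) $u$ singular at $x_t$, but the paper (and your Step~3) only treats the case where $\phi_t$ is differentiable at $x_t$. If $\phi_t$ is not differentiable there, then because $D^+\phi_t(x_t)=D^+u(x_t)+D^+_yA_t(x_t,x)$, at least one summand fails to be a singleton; but this could be $D^+_yA_t(x_t,x)$ alone — multiple Tonelli minimizers joining $x_t$ to $x$ in time $t$ — while $u$ is still differentiable at $x_t$. Neither the paper's proof nor yours rules this third possibility out, so as written the dichotomy is not shown to be exhaustive.

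Your proposed fix in ``Main obstacle'' does not close this gap as sketched. Distinct minimizers for $A_t(x_t,x)$ with the travel time held fixed generally have different conserved energies (energy is the multiplier for free-time problems, not fixed-time ones), so the extreme points of $D^+_yA_t(x_t,x)$ need not lie on a single level set of $H(x_t,\cdot)$; and the criticality condition $0\in D^+\phi_t(x_t)$ only yields $-Du(x_t)\in D^+_yA_t(x_t,x)$, which by itself does not force $D^+_yA_t(x_t,x)$ to collapse to a point, nor does placing $0$ on the topological boundary of $D^+\phi_t(x_t)$. A correct resolution would require either an additional argument showing $A_t(\cdot,x)$ must be differentiable at such a mountain-pass critical point, or a reformulation of alternative (b) as ``$x_t$ is a singular point of $\phi_t$''.
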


From the theorem above, the location of singularities affords possible information to construct ``local'' minimal orbits for Tonelli Lagrangian systems, which is totally unknown before. In the previous works of variational approach of Hamiltonian dynamical instability problem like Arnold diffusion (see, for instance, \cite{Bernard2008}, \cite{Cheng-Yan2004}, \cite{Cheng-Yan2009}, \cite{Cheng2013}, \cite{Cheng2015-1}, \cite{Cheng2015-2} and \cite{Cheng-Xue}), the diffusion orbits shadow the variational minimizers which are not local ones.

The paper is organized as follows: In section 2, we review some basic properties of viscosity solution of Hamilton-Jacobi equations. In section 3, we discuss the relation of the generalized characteristics give by the procedure of sup-convolution and Lasry-Lions regularization, then, we also discuss what happens for the procedure of inf-convolution.

\noindent{\bf Acknowledgments} This work was partially supported by the Natural Scientific Foundation of China (Grant No. 11271182 and No. 11471238), the  National Basic Research Program of China (Grant No. 2013CB834100). The authors are grateful to Liang Jin for helpful discussions on the results of this paper.

\section{Viscosity solutions and weak KAM theory}
A $C^2$ function $L:\R^n\times\R^n\to\R$ is said to be a {\em Tonelli Lagrangian} if the following assumptions are satisfied.
\begin{enumerate}[(L1)]
  \item The Hessian $\frac{\partial^2 L}{\partial v^2}(x,v)$ is positive definite for all $(x,v)\in\R^n\times\R^n$.
  \item there exists a non-decreasing function $\theta:[0,+\infty)\to[0,+\infty)$, $\theta(r)/r\to+\infty$ as $t\to +\infty$, $c_0\geqslant0$ and ,$c_1=c_1(x,R)\geqslant 0$ such that
$$
L(x,v)\geqslant\theta(|v|)-c_0,\quad(x,v)\in\R^n\times \R^n,
$$
and 
$$
|L_x(y,v)|+|L_v(y,v)|\leqslant c_1(x,R)\theta(|v|),\quad(y,v)\in\bar{B}(x,R)\times \R^n.
$$

\end{enumerate}

Let $H:\R^n\times\R^n\to\R$ be the associated {\em Tonelli Hamiltonian}, i.e.,
$$
H(x,p)=\sup_{v\in\R^n}\{\langle p,v\rangle-L(x,v)\}.
$$

Throughout this paper, we suppose $L$ is a $C^2$ Tonelli Lagrangian with conditions (L1) and (L2).

\subsection{semiconcave functions}
Let $\Omega\subset\R^n$ be a convex open set, a function $u:\Omega\rightarrow\R$ is {\em semiconcave} (with linear modulus) if there exists a constant $C>0$ such that
\begin{equation}\label{eq:SCC}
\lambda u(x)+(1-\lambda)u(y)-u(\lambda x+(1-\lambda)y)\leqslant\frac C2\lambda(1-\lambda)|x-y|^2
\end{equation}
for any $x,y\in\Omega$ and $\lambda\in[0,1]$.  Any constant $C$ that satisfies the above inequality  is called a {\em semiconcavity constant} for $u$ in $\Omega$. A function $u:\Omega\rightarrow\R$ is said to be {\em semiconvex} (with linear modulus) if $-u$ is semiconcave. A function $u:\Omega\rightarrow\R$ is said to be {\em locally semiconcave} (resp. {\em locally semiconvex}) if for each $x\in\Omega$, there exists an open ball $B(x,r)\subset\Omega$ such that $u$ is a semiconcave (resp. semiconvex) function on $B(x,r)$.

\begin{defn}
Let $u:\Omega\subset\R^n\to\R$ be a continuous function. We recall that, for any $x\in\Omega$, the closed convex sets
\begin{align*}
D^-u(x)&=\left\{p\in\R^n:\liminf_{y\to x}\frac{u(y)-u(x)-\langle p,y-x\rangle}{|y-x|}\geqslant 0\right\},\\
D^+u(x)&=\left\{p\in\R^n:\limsup_{y\to x}\frac{u(y)-u(x)-\langle p,y-x\rangle}{|y-x|}\leqslant 0\right\}.
\end{align*}
are called the {\em (Dini) subdifferential} and {\em superdifferential} of $u$ at $x$, respectively.
\end{defn}

\begin{defn}
Let $u:\Omega\to\R$ be locally Lipschitz. We recall that a vector $p\in\R^n$ is called a {\em limiting differential} of $u$ at $x$ if there exists a sequence $\{x_n\}\subset\Omega\setminus\{x\}$ such that $u$ is differentiable at $x_k$ for each $k\in\N$, and
$$
\lim_{k\to\infty}x_k=x\quad\text{and}\quad \lim_{k\to\infty}Du(x_k)=p.
$$
The set of all limiting differentials of $u$ at $x$ is denoted by $D^{\ast}u(x)$.
\end{defn}

The fundamental properties of the superdifferential of a semiconcave function are listed in the following proposition. The monograph \cite{Cannarsa-Sinestrari} is a good reference for the topic of semiconcave functions and beyond.

\begin{Pro}\label{basic_facts_of_superdifferential}
Let $u:\Omega\subset\R^n\to\R$ be a semiconcave function and let $x\in\Omega$. Then the following properties hold.
\begin{enumerate}[\rm {(}a{)}]
  \item $D^+u(x)$ is a nonempty compact convex set in $\R^n$ and $D^{\ast}u(x)\subset\partial D^+u(x)$, where  $\partial D^+u(x)$ denotes the topological boundary of $D^+u(x)$.
  \item The set-valued function $x\rightsquigarrow D^+u(x)$ is upper semicontinuous.
  \item If $D^+u(x)$ is a singleton, then $u$ is differentiable at $x$. Moreover, if $D^+u(x)$ is a singleton for every point in $\Omega$, then $u\in C^1(\Omega)$.
  \item $D^+u(x)=\mathrm{co}\, D^{\ast}u(x)$.
  \item $D^{\ast}u(x)=\big\{\lim_{i\to\infty}p_i~:~ p_i\in D^+u(x_i),\; x_i\to x,\;\mathrm{diam}\,(D^+u(x_i))\to 0\big\}$.
\end{enumerate}
\end{Pro}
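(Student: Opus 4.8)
The plan is to derive all of (a)--(e) from one structural reduction together with elementary finite-dimensional convex analysis. Since $u$ is semiconcave on the convex open set $\Omega$ with some constant $C$, the function $g:=u-\tfrac C2|\cdot|^2$ is concave on $\Omega$; in particular $g$, hence $u$, is locally Lipschitz, and $D^+u(x)=\partial g(x)+Cx$, where $\partial g(x)$ is the (convex-analytic) superdifferential of the concave function $g$. Expanding the defining inequality of $\partial g(x)$ yields the \emph{quadratic characterization} of the superdifferential: for $p\in\R^n$,
\[
p\in D^+u(x)\quad\Longleftrightarrow\quad u(y)\leqslant u(x)+\langle p,y-x\rangle+\tfrac C2|y-x|^2\ \ \text{for all }y\in\Omega .
\]
This inequality, valid globally in $y$, is the single tool used repeatedly below, and $u$ being locally Lipschitz lets me invoke Rademacher's theorem to produce differentiability points freely.

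For (a): nonemptiness, compactness and convexity of $D^+u(x)=\partial g(x)+Cx$ are the classical facts for the superdifferential of a concave function (Rockafellar), while boundedness on compact subsets comes from the local Lipschitz bound. To get $D^{\ast}u(x)\subset\partial D^+u(x)$, note first that if $p=\lim_k Du(x_k)$ with $x_k\to x$ then $p\in D^+u(x)$ by part (b). If moreover $p$ were interior, say $\bar B(p,\delta)\subset D^+u(x)$, then minimizing $\langle q,\cdot\rangle$ over $q\in\bar B(p,\delta)$ in the quadratic inequality gives $u(y)\leqslant u(x)+\langle p,y-x\rangle-\delta|y-x|+\tfrac C2|y-x|^2$; combining this at $y=x_k$ with the quadratic inequality for $Du(x_k)\in D^+u(x_k)$ tested against $x$ produces $\delta\leqslant|p-Du(x_k)|+C|x_k-x|\to0$, a contradiction. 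For (b): if $x_k\to x$, $p_k\in D^+u(x_k)$, $p_k\to p$, passing to the limit $k\to\infty$ in the quadratic inequality for $p_k$ at $x_k$ gives the quadratic inequality for $p$ at $x$, so $p\in D^+u(x)$; together with local equiboundedness of the sets $D^+u(x_k)$ this is upper semicontinuity. For (c): if $D^+u(x)=\{p\}$ then $\partial g(x)=\{p-Cx\}$, and a concave function whose superdifferential at a point is a singleton is differentiable there, so $u$ is differentiable at $x$; if this holds everywhere, $u$ is everywhere differentiable and $Du$ is the single-valued selection of the upper-semicontinuous set-valued map $D^+u$, hence continuous, so $u\in C^1(\Omega)$.

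For (d): the inclusion $\mathrm{co}\,D^{\ast}u(x)\subset D^+u(x)$ follows from (a) and convexity of $D^+u(x)$. For the reverse, since $D^+u(x)$ is compact convex it equals the closed convex hull of its exposed points (Straszewicz), and $D^{\ast}u(x)$ is closed, so it suffices to check that every exposed point $p$ of $D^+u(x)$ lies in $D^{\ast}u(x)$. Let $v$ expose $p$, i.e.\ $\langle p,v\rangle<\langle q,v\rangle$ for all $q\in D^+u(x)\setminus\{p\}$. For small $r>0$ the ball $B(x+rv,r^2)$ has positive measure, hence (Rademacher) contains a point $x_r$ at which $u$ is differentiable; as $r\to0^+$ one has $x_r\to x$ and $(x_r-x)/|x_r-x|\to v$. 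Setting $p_r:=Du(x_r)\in D^+u(x_r)$ and adding the quadratic inequality for $p$ at $x$ tested at $x_r$ to the quadratic inequality for $p_r$ at $x_r$ tested at $x$ gives $\langle p_r-p,\,x_r-x\rangle\leqslant C|x_r-x|^2$; dividing by $|x_r-x|$ and letting $r\to0$ along a subsequence with $p_r\to\bar p$ (possible by local boundedness, and $\bar p\in D^+u(x)$ by (b)) yields $\langle\bar p,v\rangle\leqslant\langle p,v\rangle$, so $\bar p=p$ since $v$ exposes $p$. Thus $p=\lim p_r\in D^{\ast}u(x)$.

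For (e): ``$\subseteq$'' is immediate, since if $p=\lim Du(x_k)\in D^{\ast}u(x)$ then each $x_k$ is a differentiability point and $D^+u(x_k)=\{Du(x_k)\}$ has diameter $0$. For ``$\supseteq$'', given $x_i\to x$, $p_i\in D^+u(x_i)$ with $\mathrm{diam}\,D^+u(x_i)\to0$ and $p_i\to p$, use upper semicontinuity of $D^+u$ at $x_i$ to pick, for each $i$, a point $y_i$ with $|y_i-x_i|<1/i$, $u$ differentiable at $y_i$, and $D^+u(y_i)\subset D^+u(x_i)+B(0,1/i)$; then $Du(y_i)\in D^+u(y_i)$ is within $\mathrm{diam}\,D^+u(x_i)+2/i$ of $p_i$, so $y_i\to x$ and $Du(y_i)\to p$, whence $p\in D^{\ast}u(x)$. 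The step I expect to be the main obstacle is the reverse inclusion in (d): one must produce differentiability points $x_r\to x$ \emph{with controlled direction} $\to v$ so that the quadratic-inequality estimate survives passage to the limit, and this needs care because the (Lebesgue-null) non-differentiability set of $u$ may contain entire lines through $x$ — which is precisely why one approaches along thin balls about the ray $x+rv$ rather than along the ray itself. Everything else is bookkeeping with the quadratic characterization and standard facts about compact convex sets in $\R^n$.
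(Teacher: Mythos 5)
The paper itself offers no proof of this proposition: it is listed as standard background with a pointer to Cannarsa--Sinestrari, so there is no internal argument to compare against. Your self-contained proof is correct and follows the standard route of that reference: the global quadratic inequality you extract from concavity of $u-\tfrac C2|\cdot|^2$ is precisely the paper's Proposition \ref{criterion-Du_semiconcave2}, the closed-graph-plus-local-boundedness argument gives (b), differentiability forcing $D^+u$ to be a singleton gives (c) and the easy half of (e), and the Straszewicz/Rademacher argument with differentiability points taken in the thin balls $B(x+rv,r^2)$ is the usual way to show exposed points of $D^+u(x)$ are reachable gradients, which together with (a)--(b) yields (d). A few small points to tighten rather than genuine gaps: (i) before invoking Straszewicz you need $\mathrm{co}\,D^{\ast}u(x)$ to be closed, so record that $D^{\ast}u(x)$ is compact (closed by a diagonal argument, bounded because it is contained in $D^+u(x)$), whence its convex hull is compact by Carath\'eodory; (ii) the paper's definition of $D^{\ast}u(x)$ requires the approximating sequence to lie in $\Omega\setminus\{x\}$, so in (e) note that the Rademacher points $y_i$ can always be chosen different from $x$, and in (a) this same requirement is what legitimizes your division by $|x_k-x|$; (iii) in (a) it is worth one sentence that when $\mathrm{int}\,D^+u(x)=\emptyset$ the boundary is all of $D^+u(x)$ and the inclusion follows already from $D^{\ast}u(x)\subset D^+u(x)$, while your contradiction argument handles the case of nonempty interior.
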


From proximal analysis point of view, the following result characterizes the semiconcavity of a continuous function and its superdifferential.

\begin{Pro}
\label{criterion-Du_semiconcave2}
Let $u:\Omega\to\R$ be a continuous function. If there exists a constant $C>0$ such that, for any $x\in\Omega$, there exists $p\in\R^n$ such that
\begin{equation}\label{criterion_for_lin_semiconcave}
u(y)\leqslant u(x)+\langle p,y-x\rangle+\frac C2|y-x|^2,\quad \forall y\in\Omega,
\end{equation}
then $u$ is semiconcave with constant $C$ and $p\in D^+u(x)$.
Conversely,
if $u$ is semiconcave  in $\Omega$ with constant $C$, then \eqref{criterion_for_lin_semiconcave} holds for any $x\in\Omega$ and $p\in D^+u(x)$.
\end{Pro}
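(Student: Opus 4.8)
The plan is to prove the two implications separately; both are elementary, and the only point requiring care is at which point the quadratic estimate is invoked in each direction.

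For the forward implication, assume \eqref{criterion_for_lin_semiconcave} holds with a uniform constant $C$. The inclusion $p\in D^+u(x)$ is immediate from the definition of $D^+u(x)$: for $y\neq x$, dividing \eqref{criterion_for_lin_semiconcave} by $|y-x|$ gives
$$
\frac{u(y)-u(x)-\langle p,y-x\rangle}{|y-x|}\leqslant \frac C2|y-x|,
$$
so the $\limsup$ of the left-hand side as $y\to x$ is $\leqslant 0$. To obtain semiconcavity, fix $x,y\in\Omega$ and $\lambda\in[0,1]$, put $z=\lambda x+(1-\lambda)y$, which lies in $\Omega$ by convexity, and apply \eqref{criterion_for_lin_semiconcave} \emph{at the point $z$} — with associated vector, say, $q$ — successively to $w=x$ and $w=y$. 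Using $x-z=(1-\lambda)(x-y)$ and $y-z=-\lambda(x-y)$, multiply the resulting two inequalities by $\lambda$ and $1-\lambda$ respectively and add: the linear terms cancel because $\lambda(x-z)+(1-\lambda)(y-z)=0$, and the quadratic coefficient collapses, since $\lambda(1-\lambda)^2+(1-\lambda)\lambda^2=\lambda(1-\lambda)$. What remains is precisely
$$
\lambda u(x)+(1-\lambda)u(y)-u(z)\leqslant\frac C2\lambda(1-\lambda)|x-y|^2,
$$
i.e. \eqref{eq:SCC}.

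For the converse, fix $x\in\Omega$ and $p\in D^+u(x)$ (nonempty by Proposition \ref{basic_facts_of_superdifferential}(a)), and let $y\in\Omega$ be arbitrary (the case $y=x$ being trivial). For $t\in(0,1]$ set $x_t=(1-t)x+ty\in\Omega$ and apply \eqref{eq:SCC} to $x,y$ with parameter $\lambda=1-t$; rearranging and dividing by $t$ yields
$$
u(y)-u(x)\leqslant\frac{u(x_t)-u(x)}{t}+\frac C2(1-t)|x-y|^2.
$$
Since $x_t\to x$ as $t\to0^+$ with $x_t-x=t(y-x)$, the defining inequality of $p\in D^+u(x)$ gives $\limsup_{t\to0^+}\big(u(x_t)-u(x)\big)/t\leqslant\langle p,y-x\rangle$. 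Taking the $\limsup$ as $t\to0^+$ in the displayed inequality (the left side being constant) produces $u(y)-u(x)\leqslant\langle p,y-x\rangle+\frac C2|x-y|^2$, which is \eqref{criterion_for_lin_semiconcave}.

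There is no genuine obstacle here. The one subtlety in the forward direction is that the hypothesis must be used at the convex combination $z$ rather than at an endpoint, after which the bookkeeping $\lambda(1-\lambda)^2+(1-\lambda)\lambda^2=\lambda(1-\lambda)$ has to be recorded; in the converse, it is worth noting that only continuity and the semiconcavity inequality \eqref{eq:SCC} are used, so no appeal to local Lipschitz continuity of $u$ or to differentiability properties is needed.
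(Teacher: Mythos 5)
Your proof is correct, and both directions are complete: in the forward direction the inclusion $p\in D^+u(x)$ follows at once from dividing \eqref{criterion_for_lin_semiconcave} by $|y-x|$, and your convex-combination computation at $z=\lambda x+(1-\lambda)y$ (with the cancellation of the linear terms and the identity $\lambda(1-\lambda)^2+(1-\lambda)\lambda^2=\lambda(1-\lambda)$) gives exactly \eqref{eq:SCC}; in the converse, the rearrangement of \eqref{eq:SCC} along the segment $x_t=(1-t)x+ty$ and the limsup estimate $\limsup_{t\to0^+}(u(x_t)-u(x))/t\leqslant\langle p,y-x\rangle$ coming from $p\in D^+u(x)$ yield \eqref{criterion_for_lin_semiconcave}. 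Note that the paper itself does not prove this proposition; it is quoted as background from the Cannarsa--Sinestrari monograph, where the usual route is to observe that semiconcavity with linear modulus and constant $C$ is equivalent to concavity of $x\mapsto u(x)-\frac C2|x|^2$ and then invoke the supergradient inequality for concave functions. Your argument is a direct, self-contained variant of that: the forward direction is essentially the same bookkeeping, while your converse replaces the reduction to concavity by a limiting argument along the segment using only \eqref{eq:SCC} and the definition of $D^+u(x)$, which is slightly more elementary (it never names the auxiliary concave function) at the cost of redoing by hand what the concavity reduction packages once and for all.
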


Finally, we introduce the concept of singularity of a semiconcave function. A point $x\in\Omega$ is called a {\em singular point} of $u$ if $D^+u(x)$ is not a singleton. The set of all singular points of $u$, also called the {\em singular set} of $u$, is denoted by $\Sigma_u$.

\subsection{Fundamental solutions and viscosity solutions}
Given $x, y\in\R^n$, we define
$$
\Gamma^t_{x,y}=\{{\gamma\in W^{1,1}([0,t],\R^n): \gamma(0)=x,\gamma(t)=y}\}
$$
Let $t>0$, we denote
\begin{equation}\label{fundamental_solution}
A_{t}(x,y)=\inf_{\gamma\in\Gamma^t_{x,y}}\int^{t}_{0}L(\gamma(s),\dot{\gamma}(s))ds.
\end{equation}
It is well know that the infimum can be achieved by $C^2$ curves. In the literature of PDEs, $A_t(x,y)$ is called a {\em fundamental solution} of \eqref{HJ1}, see, for instance, \cite{McEneaney-Dower}.

Throughout this section, we suppose the $C^2$ Tonelli Lagrangian $L$ satisfies condition (L1)-(L2). We discussed the associated Nagumo type conditions and the essential regularity results of the fundamental solutions in Appendix \ref{app_sec_regularity}. For the main regularity results we will use, see, Proposition \ref{convexity_A_t} and Proposition \ref{C11_A_t}.

Suppose $H$ is a Tonelli Hamiltonian, throughout this paper we will be concerned with the Hamilton-Jacobi equation
\begin{equation}\label{HJ1}
H\big(x,Du(x)\big)=0,\qquad x\in\R^n.
\end{equation}

We recall that a continuous function $u$ is called a {\em viscosity subsolution} of equation
\eqref{HJ1} if, for any $x\in\R^n$,
\begin{align}\label{viscosity subsolution}
H(x,p)\leqslant0,\quad\forall p\in D^+u(x)\,.
\end{align}
Similarly, $u$ is a {\em viscosity supersolution} of equation \eqref{HJ1} if, for any $x\in\R^n$,
\begin{align}\label{viscosity supersolution}
H(x,p)\geqslant0,\quad\forall p\in D^-u(x)\,.
\end{align}
Finally, $u$ is called a {\em viscosity solution} of equation \eqref{HJ1}, if it is both a viscosity subsolution and a supersolution.

\begin{Pro}
Any viscosity solution of the Hamilton-Jacobi equation \eqref{HJ1} is locally semiconcave with linear modulus.
\end{Pro}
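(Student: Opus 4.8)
The plan is to verify the pointwise criterion of Proposition~\ref{criterion-Du_semiconcave2}. Fix $x_0\in\R^n$ and a small open ball $B=B(x_0,r)$; I want to produce one constant $C>0$ such that for every $\bar x\in B$ there is $p\in\R^n$ with
\[
u(x)\leqslant u(\bar x)+\langle p,x-\bar x\rangle+\tfrac{C}{2}\,|x-\bar x|^2\qquad\text{for all }x\in B .
\]
Since $u$ is a viscosity solution of \eqref{HJ1}, I would first record the two halves of the dynamic programming principle. The subsolution property gives the domination inequality $u(x)\leqslant u(y)+A_t(y,x)$ for all $x,y\in\R^n$, $t>0$; the full viscosity-solution property gives, locally and for short time, that this becomes an equality for a suitable $y$, i.e.\ for a small fixed $t_0>0$ and every $\bar x\in B$ there is $y$ with $u(\bar x)=u(y)+A_{t_0}(y,\bar x)$. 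Now $u$ is locally Lipschitz (a quick consequence of \eqref{HJ1} and the coercivity of $p\mapsto H(x,p)$ coming from (L2)), and for fixed $t_0$ the map $y\mapsto A_{t_0}(y,\bar x)$ is coercive by (L2); hence the realizing $y=y_{t_0}$ lies in $B(x_0,2r)$ once $t_0$ is small enough, and by Tonelli's theorem together with the regularity results recalled in Appendix~\ref{app_sec_regularity} the infimum in $A_{t_0}(y_{t_0},\bar x)$ is attained along a $C^2$ minimizer $\gamma\colon[0,t_0]\to\R^n$ with $\gamma(0)=y_{t_0}$, $\gamma(t_0)=\bar x$ and velocity bounded by a constant depending only on $B$ and $t_0$.

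The second step is a curve-shifting estimate. Given $\bar x\in B$, the minimizer $\gamma$ as above, and an arbitrary $x\in B$, set
\[
\xi(s)=\gamma(s)+\tfrac{s}{t_0}\,(x-\bar x),\qquad s\in[0,t_0],
\]
so $\xi(0)=y_{t_0}$, $\xi(t_0)=x$; after shrinking $r$ and $t_0$ if necessary, all curves $\xi$ that arise, together with their velocities, stay in a single fixed compact set $K$ on which the operator norm of $D^2L$ is bounded by some $M$. Combining the domination inequality $u(x)\leqslant u(y_{t_0})+\int_0^{t_0}L(\xi,\dot\xi)\,ds$ with the identity $u(\bar x)=u(y_{t_0})+\int_0^{t_0}L(\gamma,\dot\gamma)\,ds$ and subtracting,
\[
u(x)-u(\bar x)\leqslant\int_0^{t_0}\bigl(L(\xi(s),\dot\xi(s))-L(\gamma(s),\dot\gamma(s))\bigr)\,ds .
\]
A second-order Taylor expansion of $L$ along $(\gamma,\dot\gamma)$, using $\xi-\gamma=\tfrac{s}{t_0}(x-\bar x)$ and $\dot\xi-\dot\gamma=\tfrac1{t_0}(x-\bar x)$, bounds the right-hand side by $\langle p,x-\bar x\rangle+\tfrac{C}{2}|x-\bar x|^2$ with
\[
p=\int_0^{t_0}\Bigl(\tfrac{s}{t_0}\,L_x(\gamma,\dot\gamma)+\tfrac1{t_0}\,L_v(\gamma,\dot\gamma)\Bigr)ds,\qquad C=M\Bigl(\tfrac{t_0}{3}+\tfrac1{t_0}\Bigr).
\]
Since $M$, $t_0$ and hence $C$ depend only on $B$ and not on $\bar x$, Proposition~\ref{criterion-Du_semiconcave2} applies on $B$ and $u$ is semiconcave there with constant $C$; as $x_0$ was arbitrary, $u$ is locally semiconcave with linear modulus.

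The step I expect to be the main obstacle is the local representation $u(\bar x)=u(y_{t_0})+A_{t_0}(y_{t_0},\bar x)$ with the value attained near $\bar x$ for small $t_0$. The inequality $\leqslant$ (valid for every $y$, hence $u(\bar x)\leqslant\inf_y\{u(y)+A_{t_0}(y,\bar x)\}$) is the cheap subsolution half; the reverse inequality is exactly where the full viscosity-solution hypothesis is used, and it is the reason a mere subsolution need not be semiconcave. I would obtain it by localizing the comparison principle for the Cauchy problem of $\partial_tw+H(x,D_xw)=0$ over a space-time cylinder on $B$: both $w(t,x)\equiv u(x)$ and $w(t,x)=\inf\{u(\gamma(0))+\int_0^tL(\gamma,\dot\gamma)\,ds:\gamma(t)=x\}$ are viscosity solutions there with initial trace $u$, so by finite speed of propagation they coincide for small $t$ at interior points; alternatively, one runs the standard weak-KAM compactness argument directly on short time intervals. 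Granted this, the coercivity of $A_{t_0}(\cdot,\bar x)$ forces the infimum to be attained near $\bar x$, Appendix~\ref{app_sec_regularity} supplies the $C^2$-regularity and uniform velocity bound, and the remainder is the routine uniform Taylor estimate on $K$.
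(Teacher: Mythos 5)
Your argument is correct, and it is essentially the classical proof of this fact; the paper itself states the proposition without proof, treating it as standard (it is the argument one finds in Cannarsa--Sinestrari and in the weak KAM literature the paper cites). Concretely, your two ingredients are exactly the ones the paper uses elsewhere: the domination inequality $u(x)\leqslant u(y)+A_t(y,x)$ is the subsolution half, and your crux step --- the local identity $u(\bar x)=u(y_{t_0})+A_{t_0}(y_{t_0},\bar x)$ with a nearby realizing point --- is precisely the fact $u=T_tu$ that the paper later invokes ``by well known facts from weak KAM theory''; alternatively you could short-circuit your comparison-principle/localization discussion by quoting Proposition~\ref{reachable_grad_and_backward}, since any $p\in D^*u(\bar x)$ (nonempty by Rademacher) gives a $(u,L,0)$-calibrated curve ending at $\bar x$, and its restriction to $[-t_0,0]$ furnishes the required $y_{t_0}$ and the minimizer $\gamma$ at once. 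Two small points of care, both of which your own localization already addresses: the global infimum $\inf_y\{u(y)+A_{t_0}(y,\bar x)\}$ need not obviously be attained (coercivity of $A_{t_0}(\cdot,\bar x)$ alone does not rule out superlinear decay of $u$ at infinity, since $u$ is only locally Lipschitz here), so one should either work with the cylinder-localized value function as you propose or use the calibrated-curve route; and the uniform compact set $K$ with the bound on $D^2L$ should be justified by the a priori estimates on minimizers of Proposition~\ref{Main_bound_Lem} in Appendix~\ref{app_sec_regularity}, which is exactly what they are for. With those references supplied, the curve-shifting Taylor estimate and the criterion of Proposition~\ref{criterion-Du_semiconcave2} complete the proof as you describe.
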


\begin{Pro}\label{Ext_and_reachable}
$\mathrm{Ext}\,D^+u(x)=D^{\ast}u(x)$\footnote{For any convex closed subset of $\R^n$, we denote by $\mathrm{Ext}\,C$ the set of extremal points of $C$.} for any viscosity solution $u$ of \eqref{HJ1} and any $x\in\R^n$.
\end{Pro}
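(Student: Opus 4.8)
The plan is to establish the two inclusions separately: $\mathrm{Ext}\,D^+u(x)\subseteq D^{\ast}u(x)$ is a general fact valid for any semiconcave function, while $D^{\ast}u(x)\subseteq\mathrm{Ext}\,D^+u(x)$ is where the structure of a viscosity solution of \eqref{HJ1} — specifically the strict convexity of $H$ in $p$ — must enter.

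For the inclusion $\mathrm{Ext}\,D^+u(x)\subseteq D^{\ast}u(x)$ I would use only Proposition \ref{basic_facts_of_superdifferential}(d), that $D^+u(x)=\mathrm{co}\,D^{\ast}u(x)$, together with $D^{\ast}u(x)\subseteq D^+u(x)$. If $p\in\mathrm{Ext}\,D^+u(x)$, then by Carath\'eodory's theorem $p$ is a convex combination of at most $n+1$ points of $D^{\ast}u(x)$; since each of these lies in $D^+u(x)$ and $p$ is extreme in $D^+u(x)$, that representation is trivial, so $p\in D^{\ast}u(x)$. Nothing about $H$ is used here.

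The reverse inclusion $D^{\ast}u(x)\subseteq\mathrm{Ext}\,D^+u(x)$ is the heart of the matter. First I would check that every $p\in D^{\ast}u(x)$ lies on the zero level set of $H(x,\cdot)$: writing $p=\lim_k Du(x_k)$ with $x_k\to x$ and $u$ differentiable at each $x_k$, the subsolution inequality gives $H(x_k,Du(x_k))\leqslant 0$ and the supersolution inequality gives $H(x_k,Du(x_k))\geqslant 0$, hence $H(x_k,Du(x_k))=0$; letting $k\to\infty$ and using continuity of $H$ yields $H(x,p)=0$. On the other hand, the subsolution property gives $H(x,q)\leqslant 0$ for all $q\in D^+u(x)$. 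Now suppose some $p\in D^{\ast}u(x)$ were not extreme, say $p=\lambda q_1+(1-\lambda)q_2$ with $q_1,q_2\in D^+u(x)$, $q_1\neq q_2$, $\lambda\in(0,1)$. Since $L$ satisfies (L1), $H(x,\cdot)$ is strictly convex (indeed $H_{pp}=(L_{vv})^{-1}>0$ via the Legendre transform), so
$$
0=H(x,p)=H\bigl(x,\lambda q_1+(1-\lambda)q_2\bigr)<\lambda H(x,q_1)+(1-\lambda)H(x,q_2)\leqslant 0,
$$
a contradiction. Hence $p\in\mathrm{Ext}\,D^+u(x)$, and the two inclusions together prove the proposition.

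The step I expect to require the most care — the only genuine obstacle — is the first half of the reverse inclusion, namely justifying $H(x,p)=0$ (rather than merely $\leqslant 0$) for $p\in D^{\ast}u(x)$: this is exactly where one needs $u$ to be a solution and not merely a subsolution, and where one uses that, by definition of $D^{\ast}u(x)$, the approximating points $x_k$ can be taken among the points of differentiability of $u$ (which are dense since $u$ is locally Lipschitz). Once this is in hand, strict convexity of $H$ in $p$ closes the argument at once; note that Proposition \ref{basic_facts_of_superdifferential}(a)'s inclusion $D^{\ast}u(x)\subseteq\partial D^+u(x)$ is not needed here — it is recovered, in sharper form, from the conclusion.
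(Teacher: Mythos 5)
Your proof is correct and complete; the paper itself states Proposition \ref{Ext_and_reachable} without proof, as a recalled known fact, and your argument is exactly the standard one behind it: $\mathrm{Ext}\,D^+u(x)\subseteq D^{\ast}u(x)$ follows from $D^+u(x)=\mathrm{co}\,D^{\ast}u(x)$ together with extremality (valid for any semiconcave function), while the reverse inclusion uses that $H(x,\cdot)=0$ on $D^{\ast}u(x)$ (limits of gradients at differentiability points, where sub- and supersolution inequalities both apply), $H(x,\cdot)\leqslant 0$ on $D^+u(x)$, and strict convexity of $H$ in $p$ coming from (L1). The only step you leave implicit, $D^{\ast}u(x)\subseteq D^+u(x)$ via upper semicontinuity of $D^+u$ (Proposition \ref{basic_facts_of_superdifferential}), is immediate, so there is no gap.
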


\begin{Pro}\label{reachable_grad_and_backward}
Let $x\in \R^n$ and $u:\R^n\to\R$ be a viscosity solution of the Hamilton-Jacobi equation \eqref{HJ1}. Then $p\in D^{\ast}u(x)$ if and only if there exists a unique $C^2$ curve $\gamma:(-\infty,0]\to \R^n$ with $\gamma(0)=x$ which is which is a $(u,L,0)$-calibrated curve\footnote{For the concept of dominated functions and calibrated curves, see, for instance, \cite{Fathi-book}}, and $p=L_v(x,\dot{\gamma}(0))$.
\end{Pro}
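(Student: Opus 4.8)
The plan is to prove the two implications separately, using the identification of a viscosity solution of \eqref{HJ1} with a (backward) weak KAM solution, so that by Fenchel duality every point of $\R^n$ is the endpoint of a backward $(u,L,0)$-calibrated curve, together with the first-variation analysis of minimizers; throughout, conditions (L1)--(L2) guarantee the regularity of minimizers and the relevant compactness. Uniqueness will be handled at the end via the Euler--Lagrange flow.

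For the \emph{if} direction, I would start from a $C^2$ $(u,L,0)$-calibrated curve $\gamma:(-\infty,0]\to\R^n$ with $\gamma(0)=x$ and put $p:=L_v(x,\dot\gamma(0))$. The preliminary step is to show that $u$ is differentiable at every interior point $\gamma(s)$, $s<0$, with $Du(\gamma(s))=L_v(\gamma(s),\dot\gamma(s))$. To see this, fix $s<0$ and a small $h>0$ with $s+h\le0$; for $y$ near $\gamma(s)$ interpolate linearly between $y$ and $\gamma(s+h)$ on $[s,s+h]$ and compare the cost of this competitor with the calibrated piece of $\gamma$ using the domination inequality $u(\beta(b))-u(\beta(a))\le\int_a^bL(\beta,\dot\beta)\,d\tau$ (valid since $u$ is a subsolution). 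A standard first-variation expansion gives $L_v(\gamma(s),\dot\gamma(s))\in D^-u(\gamma(s))$; since $u$ is locally semiconcave, $D^+u(\gamma(s))\ne\emptyset$, and a point at which both one-sided differentials are nonempty is a point of differentiability. Letting $s\to0^-$ and using the continuity of $\gamma$ and of $L_v$, we get $\gamma(s)\to x$ and $Du(\gamma(s))\to L_v(x,\dot\gamma(0))=p$, hence $p\in D^*u(x)$ by the very definition of the limiting differential.

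For the \emph{only if} direction, given $p\in D^*u(x)$ I would choose $x_k\to x$ with $u$ differentiable at $x_k$ and $Du(x_k)\to p$. Since $u$ is a viscosity solution, $H(x_k,Du(x_k))=0$, and weak KAM theory supplies a $C^2$ backward calibrated curve $\gamma_k:(-\infty,0]\to\R^n$ with $\gamma_k(0)=x_k$ and $\dot\gamma_k(0)=H_p(x_k,Du(x_k))$; being calibrated, each $\gamma_k$ is a minimizer and hence solves the Euler--Lagrange equation. As $u$ is locally Lipschitz and $L$ is superlinear by (L2), the speeds $|\dot\gamma_k|$ are uniformly bounded on each $[-T,0]$; Ascoli--Arzel\`a combined with the Euler--Lagrange equation, applied along a diagonal sequence as $T\to+\infty$, yields a subsequence converging in $C^2_{\mathrm{loc}}((-\infty,0])$ to a curve $\gamma$ with $\gamma(0)=x$. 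Passing to the limit in the calibration identity $u(\gamma_k(b))-u(\gamma_k(a))=\int_a^bL(\gamma_k,\dot\gamma_k)\,d\tau$ for each fixed $a<b\le0$ shows $\gamma$ is $(u,L,0)$-calibrated on $(-\infty,0]$, and $\dot\gamma(0)=\lim_k H_p(x_k,Du(x_k))=H_p(x,p)$ gives $L_v(x,\dot\gamma(0))=p$ by Fenchel duality. Uniqueness then follows because any two $C^2$ calibrated curves with the stated properties share the base point $x$ and the velocity $H_p(x,p)$ at time $0$, both solve the second-order Euler--Lagrange equation (whose right-hand side is $C^1$ since $L\in C^2$), so they coincide on $(-\infty,0]$ by Cauchy--Lipschitz.

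I expect the main obstacle to be the \emph{only if} part: producing the limit curve and, crucially, verifying that it remains calibrated on the \emph{entire} half-line $(-\infty,0]$ rather than only on a bounded time interval. This requires uniform control of the minimizers $\gamma_k$ arbitrarily far back in time, which is exactly where the Tonelli conditions (L1)--(L2) and the Lipschitz bound on $u$ enter and where, outside the closed-manifold setting of \cite{Fathi-book}, slightly more care than in the classical weak KAM framework is needed; the differentiability-along-calibrated-curves step in the \emph{if} part is the other technical point, but it is by now routine.
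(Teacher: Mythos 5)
The paper itself offers no proof of this proposition: it is quoted as a known fact of weak KAM theory (with \cite{Fathi-book} and the companion papers \cite{Cannarsa-Chen-Cheng}, \cite{Cannarsa-Cheng3} as the implicit sources), so there is no in-paper argument to compare yours against. Your proposal follows the standard route of those references and is essentially sound: differentiability of $u$ at interior points of a calibrated curve (via $D^-u\cap D^+u\neq\emptyset$) gives the ``if'' direction, approximation by differentiability points plus Ascoli--Arzel\`a and the Euler--Lagrange equation gives the ``only if'' direction, and uniqueness is Cauchy--Lipschitz for the Euler--Lagrange initial value problem with data $(x,H_p(x,p))$.

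The one place where you lean on a black box is the sentence ``weak KAM theory supplies a $C^2$ backward calibrated curve $\gamma_k:(-\infty,0]\to\R^n$ with $\gamma_k(0)=x_k$ and $\dot\gamma_k(0)=H_p(x_k,Du(x_k))$.'' On a closed manifold this is classical, but on $\R^n$ it is exactly the nontrivial existence content of the proposition at differentiability points: one must know that $u$ is a fixed point of the Lax--Oleinik semigroup, $u=T_tu$ for all $t>0$, that the infimum defining $T_tu(x_k)$ is attained (superlinearity in (L2) together with the Lipschitz bound on $u$, or the a priori estimates of Proposition \ref{Main_bound_Lem}, give the needed compactness), and that the finite-time minimizing segments can be concatenated and diagonalized into a ray calibrated on all of $(-\infty,0]$. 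You flag the non-compactness issue but locate it in the limit $\gamma_k\to\gamma$, which is the easy step once the $\gamma_k$ exist; the real work is the construction of the $\gamma_k$ themselves, and a complete write-up should either carry this out or cite it explicitly. Two smaller points: in the ``if'' direction the approximating points $\gamma(s)$ must differ from $x$ (if $\gamma\equiv x$ near $0$, note instead that $u$ is then differentiable at $x$ and $D^*u(x)=\{Du(x)\}$, so the conclusion still holds), and the identity $\dot\gamma_k(0)=H_p(x_k,Du(x_k))$ deserves the one-line justification $L_v(x_k,\dot\gamma_k(0))\in D^+u(x_k)=\{Du(x_k)\}$ coming from the first variation at the endpoint.
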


\subsection{Generalized characteristic}
The construction of the singular set or cut loci of viscosity solutions is a very important and hard problem in many fields such as Riemannian geometry, optimal control, classical mechanics, etc.. It is known that the study of propagation of singularities can go back to \cite{alca99} for general semiconcave functions by the method from nonsmooth analysis. Some dynamical nature of the singularity was found by the concept of generalized characteristic.

\begin{defn}
A Lipschitz arc $\mathbf{x}:[0,\tau]\to\R^n$ is said to be a {\em generalized characteristic} of the Hamilton-Jacobi equation \eqref{HJ1} if $\mathbf{x}$ satisfies the differential inclusion
\begin{equation}\label{generalized_characteristics}
\dot{\mathbf{x}}(s)\in\mathrm{co}\, H_p\big(\mathbf{x}(s),D^+u(\mathbf{x}(s))\big),\quad \text{a.e.}\;s\in[0,\tau]\,.
\end{equation}
\end{defn}

A basic criterion for the propagation of singularities along generalized characteristic was given in \cite{Albano-Cannarsa} (see \cite{Cannarsa-Yu,Yu} for an improved version and simplified proof).

\begin{Pro}[\cite{Albano-Cannarsa}]\label{criterion_on_gen_char}
Let $u$ be a viscosity solution of Hamilton-Jacobi equation \eqref{HJ1} and let $x_0\in\R^n$. Then there exists a generalized characteristic  $\mathbf{x}:[0,\tau]\to\R^n$ with initial point $\mathbf{x}(0)=x_0$. Moreover, if
$x_0\in\Sigma_u$, then $\mathbf{x}(s)\in\Sigma_u$ for all $s\in [0,\tau]$. Furthermore, if
\begin{equation}\label{condition_for_propagation_singularities}
0\not\in\mathrm{co} \,H_p(x_0,D^+u(x_0))\,,
\end{equation}
 then  $\mathbf{x}(\cdot)$ is injective for every $s\in[0,\tau]$.
\end{Pro}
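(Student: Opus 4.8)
The plan is to establish the three assertions separately: existence of a generalized characteristic issuing from $x_0$, forward invariance of the singular set $\Sigma_u$, and injectivity under the non-criticality hypothesis \eqref{condition_for_propagation_singularities}.

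For existence, I would put $F(x):=\mathrm{co}\,H_p(x,D^+u(x))$ and verify that $F$ is a multifunction with nonempty compact convex values that is locally bounded (because $u$ is locally Lipschitz, so $D^+u$ is locally bounded) and upper semicontinuous, the latter from upper semicontinuity of $x\rightsquigarrow D^+u(x)$ (Proposition \ref{basic_facts_of_superdifferential}(b)), continuity of $H_p$, and the fact that passing to the convex hull preserves upper semicontinuity and compactness in $\R^n$. The classical existence theory for upper semicontinuous differential inclusions then applies --- approximate $F$ from outside by $F_\varepsilon:=F+\varepsilon\bar B(0,1)$, select continuous $f_\varepsilon\in F_\varepsilon$, solve $\dot{\mathbf{x}}_\varepsilon=f_\varepsilon(\mathbf{x}_\varepsilon)$ by Peano's theorem, and let $\varepsilon\to0$ using an Arzel\`a--Ascoli extraction and the Convergence Theorem --- producing a Lipschitz arc $\mathbf{x}\colon[0,\tau]\to\R^n$ with $\tau>0$, $\mathbf{x}(0)=x_0$, and $\dot{\mathbf{x}}(s)\in F(\mathbf{x}(s))$ a.e. For the second assertion it is crucial to select a specific such arc: since $H$ is strictly convex in $p$, each $D^+u(x)$ contains a unique minimal-energy element $p_0(x)$, namely $H(x,p_0(x))=\min_{p\in D^+u(x)}H(x,p)$, and I would build $\mathbf{x}$ as a uniform limit of Euler polygonals for the (generally discontinuous) field $x\mapsto H_p(x,p_0(x))$; the limit still solves the inclusion.

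For propagation, assume $x_0\in\Sigma_u$ and suppose, toward a contradiction, that $\mathbf{x}$ reaches a differentiability point of $u$ at some positive time. Where the arc sits at differentiability points of $u$, the single element of $D^+u(\mathbf{x}(\cdot))$ is an extremal --- hence reachable --- gradient of zero energy (Propositions \ref{Ext_and_reachable} and \ref{reachable_grad_and_backward}), so there $\mathbf{x}$ runs along a classical characteristic of \eqref{HJ1}; moreover, by the minimal-energy selection built into the Euler construction and upper semicontinuity of $D^+u$, the momenta carried by $\mathbf{x}$ on such sub-arcs have limits in $D^\ast u$ at their endpoints. Tracking the classical characteristic backwards and invoking Proposition \ref{reachable_grad_and_backward}, one obtains a $(u,L,0)$-calibrated curve reaching back in time; since every interior point of a $(u,L,0)$-calibrated curve is a differentiability point of $u$, identifying $\mathbf{x}$ with this curve down to time $0$ would force $x_0$ to be a differentiability point, contradicting $x_0\in\Sigma_u$. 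The delicate point --- and the one I expect to be the main obstacle --- is precisely this identification: ruling out that $\mathbf{x}$ enters $\Sigma_u$ and later leaves it (equivalently, showing that singularity, once present along $\mathbf{x}$, persists for at least a short time), which is exactly where the semiconcavity of $u$, the strict convexity of $H$, and the convexity/regularity of the fundamental solutions $A_t$ must be used; this local persistence is the technical heart of the Albano--Cannarsa (and Cannarsa--Yu) argument, and once it is in place the invariance of $\Sigma_u$ on $[0,\tau]$ follows by connectedness (shrinking $\tau$ if necessary).

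For injectivity, assume \eqref{condition_for_propagation_singularities}, i.e.\ $0\notin F(x_0)$. Let $q_0$ be the projection of the origin onto the compact convex set $F(x_0)$ and set $\nu:=q_0/|q_0|$, so that $\langle v,\nu\rangle\ge|q_0|$ for every $v\in F(x_0)$. Upper semicontinuity of $F$ yields $r>0$ with $F(y)\subset F(x_0)+B(0,|q_0|/2)$ whenever $|y-x_0|<r$, hence $\langle v,\nu\rangle\ge|q_0|/2$ for all $v\in F(y)$ with $|y-x_0|<r$. Shrinking $\tau$ so that $\mathbf{x}([0,\tau])\subset B(x_0,r)$ --- which leaves the first two assertions intact --- we get, for $0\le s<t\le\tau$, that $\langle\mathbf{x}(t)-\mathbf{x}(s),\nu\rangle=\int_s^t\langle\dot{\mathbf{x}}(r),\nu\rangle\,dr\ge\tfrac{|q_0|}{2}(t-s)>0$, so $s\mapsto\langle\mathbf{x}(s),\nu\rangle$ is strictly increasing and $\mathbf{x}$ is injective on $[0,\tau]$.
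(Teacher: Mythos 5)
The paper does not prove this proposition; it is quoted from Albano--Cannarsa \cite{Albano-Cannarsa} and used as a black box, so there is no internal proof to compare against. Taking your proposal on its own merits: the existence argument via upper semicontinuity of $F(x)=\mathrm{co}\,H_p(x,D^+u(x))$ and standard differential-inclusion theory is sound, and the injectivity argument --- separate $0$ from the compact convex set $F(x_0)$ by $\nu=q_0/|q_0|$, push the lower bound $\langle v,\nu\rangle\geqslant|q_0|/2$ to a neighborhood by upper semicontinuity, integrate --- is exactly the right one.

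The propagation part, however, has a genuine gap, and the contradiction you sketch is not a route that closes. You propose: if $\mathbf{x}(t_1)$ is a regular point of $u$, track the backward calibrated curve through $\mathbf{x}(t_1)$ and force $x_0$ to be regular. This fails for two reasons. First, $\Sigma_u$ need not be closed (upper semicontinuity of $D^+u$ does not forbid singular points with $\mathrm{diam}\,D^+u\to 0$ accumulating at a regular point), so there is no well-defined first exit time to localize around. Second, and decisively, nothing in your construction identifies $\mathbf{x}|_{[0,t_1]}$ with the backward calibrated curve issuing from $\mathbf{x}(t_1)$; that identification is precisely what needs to be proved, and you acknowledge as much by deferring the ``technical heart'' to the references. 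The ingredient that actually makes the argument work --- and which you name (the minimal-energy selection) but do not exploit --- is a strict energy gap. Because $x_0\in\Sigma_u$, the set $D^{\ast}u(x_0)=\mathrm{Ext}\,D^+u(x_0)$ (Proposition~\ref{Ext_and_reachable}) contains at least two distinct points, and each has $H(x_0,\cdot)=0$ by Proposition~\ref{reachable_grad_and_backward} and conservation of energy; strict convexity of $H(x_0,\cdot)$ then forces $H(x_0,p_0)<0$ for the minimizer $p_0$ of $H(x_0,\cdot)$ over $D^+u(x_0)$. The Euler construction produces not just the arc $\mathbf{x}$ but a measurable selection $p(s)\in D^+u(\mathbf{x}(s))$ with $\dot{\mathbf{x}}(s)=H_p(\mathbf{x}(s),p(s))$ a.e.\ and $p(\cdot)$ right-continuous at $0$ with $p(0)=p_0$, and therefore $H(\mathbf{x}(s),p(s))<0$ for all small $s$. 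If some such $\mathbf{x}(s)$ were a regular point, then $D^+u(\mathbf{x}(s))=\{Du(\mathbf{x}(s))\}$ would force $p(s)=Du(\mathbf{x}(s))$ and, since $u$ is a viscosity solution, $H(\mathbf{x}(s),p(s))=0$ --- a contradiction. This quantitative gap, transported by continuity of the selection, is the missing step; without it the persistence of singularity along $\mathbf{x}$ on $[0,\tau]$ is unproved.
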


\section{Procedure of sup-convolution and generalized characteristics}
Let $H$ be a Tonelli Hamiltonian on $\R^n$. Recall the Lax-Oleinik operators $T_t$ and $\breve{T}_t$, i.e., for any $u\in C(\R^n)$,
\begin{gather}
\breve{T}_tu(x):=\sup_{y\in\R^n}\{u(y)-A_t(x,y))\},\label{L-L regularity_sup}\\
T_tu(x):=\inf_{y\in\R^n}\{u(y)+A_t(y,x))\}.\label{L-L regularity_inf}
\end{gather}
When taking $H(p)=|p|^2/2$ and the kernel $A_t(x,y)=\frac 1{2t}|x-y|^2$, the two operators above are closely linked to the so-called Lasry-Lions regularization procedure (\cite{Lasry-Lions}) which is written in the form of {\em sup-convolution} and {\em inf-convolution}, respectively. This type of regularization is also called Moreau-Yosida regularization in convex analysis. A more detailed formulation can be found in \cite{Attouch} with respect to the aforementioned quadratic kernel.

\subsection{Procedure of sup-convolution and generalized characteristics}\label{procedure_of_sup_convolution}
Recently, in \cite{Cannarsa-Chen-Cheng} and \cite{Cannarsa-Cheng3}, the authors studied the intrinsic relation of propagation of singularities along the generalized characteristics and the following procedure of sup-convolution.

Fix $x\in\R^n$, $0<t\leqslant t_0\ll 1$, then there exists $R(x,t)>0$ such that, the function
\begin{equation}\label{eq:psi_t}
	\psi_t(y):=u(y)-A_t(x,y),\quad y\in\bar{B}(x,R(x,t)),
\end{equation}
has a unique maximizer for each $t\in(0,t_0]$, where $A_t(x,y)$ is a fundamental solution with respect to the associated Tonelli Lagrangian $L$. 

Suppose that $u(\cdot)$ is semiconcave while $A_t(x,\cdot)$ is locally semiconcave (Proposition \ref{semiconcavity_A_t}) and convex when $t\in(0,t_0]$ (Proposition \ref{convexity_A_t}), say $C_1>0$ (resp. $-C_2(t)$) is the semiconcavity (resp. semiconvexity) constant of $u(\cdot)$ (resp. $A_t(x,\cdot)$). Note that, by Proposition \ref{C11_A_t}, the constant $C_2(t)=\frac Ct$, thus $\psi_t(\cdot)$ is strictly concave in $\bar{B}(x,R(x,t))$ and consequently we have a unique maximizer for each $t\in(0,t_0]$, which is also a unique critical point of $\psi_t$.

%We already have an intrinsic explanation of the propagation of singularities along generalized characteristics by such a procedure of sup-convolution, see \cite{Cannarsa-Chen-Cheng} and \cite{Cannarsa-Cheng3}. 
Let us define the arc $\mathbf{y}:[0,t_0]\to\R^n$ by
\begin{equation}\label{maximizer_arc_1}
\mathbf{y}(t)=\begin{cases}
x,&t=0,\\
y_t,&t\in(0,t_0].
\end{cases}
\end{equation}
If $\xi_t:[0,t]\to\R^n$ is the unique minimizer in the definition of $A_t(x,y)$, we define
\begin{equation}\label{maximizer_arc_2}
p_t(s):=L_v(\xi_t(s),\dot{\xi}_t(s)),\quad s\in [0,t],
\end{equation}
the associated dual arc with respect to $\xi_t(s)$.

\begin{Pro}[\cite{Cannarsa-Chen-Cheng}]\label{sing_arc_gen_char}
Let $u$ be a locally semiconcave function and $x\in\Sigma_u$, the singular set of $u$. There exists $t_1\leqslant t_0$ such that the arc $\mathbf{y}:[0,t_1]\to\R^n$ defined in \eqref{maximizer_arc_1} is a generalized characteristic composed of singular points of $u$, i.e., $\mathbf{y}:[0,t_1]\to\R^n$ is Lipschitz continuous, $\mathbf{y}(t)\in\Sigma_{u}$ for all $t\in[0,t_1]$, and satisfies
\begin{equation}\label{eq:sing_gen_char}
\dot{\mathbf{y}}(\tau)\in\mbox{\rm co}\, H_p(\mathbf{y}(\tau),D^+u(\mathbf{y}(\tau))),\quad\text{a.e.}\ \tau\in[0,t_1].
\end{equation}
Moreover, 
\begin{equation}\label{eq:strong_gen_char_1}
\dot{\mathbf{y}}^+(0)=H_p(x,p_0),
\end{equation}
where $p_0$ is the unique element of minimal energy:
$$
H(x,p)\geqslant H(x,p_0),\quad \forall p\in D^+u(x).
$$
\end{Pro}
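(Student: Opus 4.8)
The plan is to read off all four conclusions from the first–order optimality condition at the unique maximizer $y_t$, combined with the $C^{1,1}$–regularity and convexity of the kernel $A_t(x,\cdot)$ from Appendix~\ref{app_sec_regularity}. First I would fix $t\in(0,t_1]$ with $t_1$ small: since $\psi_t$ is strictly concave on $\bar{B}(x,R(x,t))$ with $y_t$ in its interior, one has $0\in D^+\psi_t(y_t)$, and since $A_t(x,\cdot)$ is $C^{1,1}$ near $y_t$ (Propositions~\ref{semiconcavity_A_t}, \ref{convexity_A_t} and \ref{C11_A_t}), the sum rule for superdifferentials gives $p_t(t)=D_yA_t(x,y_t)=L_v(\xi_t(t),\dot{\xi}_t(t))\in D^+u(y_t)$. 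Dually, $\breve{T}_tu\in C^{1,1}$ by (P1), so comparing $\breve{T}_tu(x')\geqslant u(y_t)-A_t(x',y_t)$ with the equality at $x'=x$ and using the $C^{1,1}$–regularity of $A_t$ in the first argument forces $D\breve{T}_tu(x)=-D_xA_t(x,y_t)=p_t(0)$; thus $s\mapsto p_t(s)$ in \eqref{maximizer_arc_2} is the Hamiltonian dual arc along $\xi_t$, and conservation of energy yields $H(y_t,p_t(t))=H(x,p_t(0))$.

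I would then prove that $\mathbf{y}$ is Lipschitz on $[0,t_1]$. Given $\tau,\tau'\in(0,t_1]$, apply the monotonicity inequality for superdifferentials of the semiconcave function $u$ (a direct consequence of Proposition~\ref{criterion-Du_semiconcave2}) to $p_\tau(\tau)\in D^+u(y_\tau)$ and $p_{\tau'}(\tau')\in D^+u(y_{\tau'})$, and split $D_yA_\tau(x,y_\tau)-D_yA_{\tau'}(x,y_{\tau'})$ into a spatial increment, bounded below by the $\tfrac{C}{\tau}$–strong convexity of $A_\tau(x,\cdot)$ (Proposition~\ref{C11_A_t}), and a temporal increment, controlled via $\partial_\tau A_\tau(x,\cdot)=-H(\cdot,D_yA_\tau(x,\cdot))$; this gives $|y_\tau-y_{\tau'}|\leqslant\mathrm{const}\cdot|\tau-\tau'|$ on compact subintervals of $(0,t_1]$. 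Together with the elementary estimate $|y_t-x|=O(t)$ — which follows from the quadratic lower bound on $A_t(x,\cdot)$ and the local Lipschitz bound on $u$ — and the short–time analysis of the next paragraph, this yields the Lipschitz arc with $\mathbf{y}(0)=x$.

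For the initial velocity I would localize and rescale. Writing competitors as $y=x+t\zeta$, one has $A_t(x,x+t\zeta)=tL(x,\zeta)+o(t)$ uniformly for $\zeta$ in bounded sets (the minimizing extremal stays $O(t)$–close to $x$ with velocity $\approx\zeta$), while the one–sided directional derivative of the semiconcave $u$ gives $u(x+t\zeta)-u(x)=t\min_{p\in D^+u(x)}\langle p,\zeta\rangle+o(t)$; hence $t^{-1}\big(\psi_t(x+t\zeta)-u(x)\big)$ converges, uniformly on bounded sets, to $\Phi(\zeta):=\min_{p\in D^+u(x)}\langle p,\zeta\rangle-L(x,\zeta)$. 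Since $|y_t-x|=O(t)$ forces equicoercivity and $\Phi$ is strictly concave (superlinear $-L$) with a unique maximizer $\zeta^{\ast}$, we get $t^{-1}(y_t-x)\to\zeta^{\ast}$; a Sion minimax argument then gives $\sup_\zeta\Phi(\zeta)=\min_{p\in D^+u(x)}H(x,p)=H(x,p_0)$ with $p_0$ the minimal–energy element \eqref{eq:ME}, and identifies $\zeta^{\ast}=H_p(x,p_0)$. This is \eqref{eq:strong_gen_char_1}: $\dot{\mathbf{y}}^+(0)=H_p(x,p_0)$; the same argument shows $p_t(0)\to p_0$ as $t\to0^+$.

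What remains — the differential inclusion together with $\mathbf{y}(\tau)\in\Sigma_u$ — is the step I expect to be hardest. At almost every $\tau\in(0,t_1)$ the Lipschitz arc $\mathbf{y}$ is differentiable, and differentiating the first–order identity $D_yA_\tau(x,y_\tau)\in D^+u(y_\tau)$ in $\tau$ (a Danskin/implicit–function argument that has to be pushed through the nonsmooth constraint $u$) should give $\dot{\mathbf{y}}(\tau)\in H_p(y_\tau,D^+u(y_\tau))\subset\mathrm{co}\,H_p(y_\tau,D^+u(y_\tau))$; a finer analysis shows one may take the minimal–energy element of $D^+u(y_\tau)$ here, which extends \eqref{eq:strong_gen_char_1} along the whole arc. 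For the singularity, if $y_\tau$ were a regular point then $D^+u(y_\tau)=\{p_\tau(\tau)\}$; using $p_\tau(0)\to p_0$, the identity $H(y_\tau,p_\tau(\tau))=H(x,p_\tau(0))$ and the hypothesis that $D^+u(x)$ is not a singleton, I would derive a contradiction by following two distinct supporting parabolas of $u$ at $x$ and checking that both of their slopes persist in $D^+u(y_\tau)$ for $\tau$ small. The main obstacle is exactly here: differentiating the marginal relation where $u$ is genuinely multivalued, and showing that the singularity is not lost as $y_\tau\to x$ despite the mere upper semicontinuity of $x\rightsquigarrow D^+u(x)$; the convexity of the fundamental solution from Appendix~\ref{app_sec_regularity}, which confines $y_t$ to $B(x,R(x,t))$ with $R(x,t)$ of order $t$, is what makes the required quantitative control available.
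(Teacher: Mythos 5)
First, note that this paper does not prove Proposition \ref{sing_arc_gen_char}: it is quoted from \cite{Cannarsa-Chen-Cheng}, and the only ingredients reproduced here are the first-order identity $D_yA_t(x,y_t)\in D^+u(y_t)$, the monotonicity estimate \eqref{basic_inequalty}, and the bounds of Appendix \ref{app_sec_regularity}. Your first three paragraphs follow exactly these lines; in particular your rescaled-limit/minimax computation of $\lim_{t\to0^+}(y_t-x)/t=H_p(x,p_0)$ is essentially the same argument the paper itself uses to prove (P4) in Section 3, so \eqref{eq:strong_gen_char_1} is in good shape. One quantitative point is not closed, though: Lipschitz continuity of $\mathbf{y}$ on compact subintervals of $(0,t_1]$ together with $|y_t-x|=O(t)$ and $t^{-1}(y_t-x)\to\zeta^{\ast}$ does \emph{not} give Lipschitz continuity on $[0,t_1]$ (compare $t\mapsto t\zeta^{\ast}+t^{3/2}\sin(1/t)$). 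You must show that the constant obtained by balancing the $\tfrac{C}{\tau}$-convexity of $A_\tau(x,\cdot)$ (Proposition \ref{convexity_A_t}) against the $y$-variation of $\partial_\tau A_\tau(x,\cdot)$ — whose Lipschitz constant also degenerates as $\tau\to0^+$ — remains bounded uniformly down to $\tau=0$; this uniformity is precisely what the a priori bounds $\Delta(x,R/\tau)$ with $R=R(x,\tau)\sim\tau$ and Proposition \ref{equi_Lip} are set up to deliver in \cite{Cannarsa-Chen-Cheng}, and it needs to be carried out, not inferred.

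The substantive gap is that the two assertions which make $\mathbf{y}$ a \emph{singular} generalized characteristic — the a.e.\ inclusion \eqref{eq:sing_gen_char} and $\mathbf{y}(\tau)\in\Sigma_u$ for all $\tau\in[0,t_1]$ — are only announced (``should give'', ``I would derive a contradiction''), and the fallback you sketch for the singularity cannot work as stated. Upper semicontinuity of $x\rightsquigarrow D^+u(x)$ gives no lower containment: in general \emph{no} element of $D^+u(x)$, let alone the two limiting differentials $p_1,p_2$, belongs to $D^+u(\mathbf{y}(\tau))$ for $\tau>0$. Take $u=\min\{f_1,f_2\}$ with two smooth branches in $\R^n$, $n\geqslant2$: along the singular set one has $D^+u(\mathbf{y}(\tau))=[Df_1(\mathbf{y}(\tau)),Df_2(\mathbf{y}(\tau))]$, a segment that generically contains neither $Df_1(x)$ nor $Df_2(x)$, so ``persistence of the two supporting parabolas'' fails even in the simplest examples, and a first-order value comparison at $x$ cannot detect the contradiction either, since both the upper and lower expansions of $\psi_t(y_t)-u(x)$ give $tH(x,p_0)+o(t)$. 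Likewise, ``differentiating the marginal relation by Danskin through the nonsmooth constraint'' is exactly the statement to be proved, not a tool; remember also that $u$ here is only locally semiconcave, so no viscosity-solution energy identities are available. These two points are the actual content of the cited result, obtained there by quantitative arguments on the maximizers (difference quotients of $\tau\mapsto y_\tau$ played against the first-order conditions, \eqref{basic_inequalty}, and the $\tfrac{C}{\tau}$-convexity and $C^{1,1}$ bounds of Propositions \ref{convexity_A_t} and \ref{C11_A_t}); as it stands, your proposal establishes the initial-velocity formula but not the core of the proposition.
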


\iffalse
\begin{center}
\begin{figure}
	\begin{tikzpicture}
\clip (-2.5,-2.5) rectangle (2.5,2.5);
\draw [red][name path=curve 1] (-2,-2) .. controls (-1,.5) .. (2,2);
\draw [blue][name path=curve 2] (-2,-2) .. controls (.8,-.5) .. (2,2);
\draw [blue][name path=curve 3] (-2,-2) .. controls (.6,-.4) .. (0.9,1.45);  
\draw [blue][name path=curve 4] (-2,-2) .. controls (0.35,-.24) .. (0.4,1.24); 
\fill [name intersections={of=curve 1 and curve 2, by={a,b}}] 
        (a) circle (2pt)
        (b) circle (2pt);
\fill [name intersections={of=curve 1 and curve 3, by={a,c}}] 
        (a) circle (2pt)
        (c) circle (2pt);
\fill [name intersections={of=curve 1 and curve 4, by={a,d}}] 
        (a) circle (2pt)
        (d) circle (2pt);
\draw (-2,-2) node[left] {$x$};
\draw (2,2) node[right] {$y_t$};
\draw (0.9,1.45) node[above] {$y_{t'}$};
\draw (0.4,1.24) node[above] {$y_{t''}$};
\draw (1.2,0) node[right][blue] {$\displaystyle \xi_t(s)$};
\draw (-1.2,0) node[left][red] {$\displaystyle \mathbf{y}(s)$};
\end{tikzpicture}
  \caption{The procedure of sup-convolution}
\end{figure}
\end{center}
\fi

\subsection{Lasry-Lions regularization}
In this section, we will explain the connection between Lasry-Lions regularization (\cite{Lasry-Lions}) and generalized characteristics first found in \cite{Albano-Cannarsa}. We only concentrate to the case of sup-convolution $\breve{T}_tu$ with $u$ a locally semiconcave function.

For $t>0$, recalling that
\begin{gather}
\breve{T}_tu(x):=\sup_{y\in\R^n}\{u(y)-A_t(x,y))\},\label{L-L regularity}
\end{gather}
where $u:\R^n\to\R$ is any locally semiconcave function, and $A_t(x,y)$ is the fundamental solution with respect to any Tonelli Lagrangian $L$.

%For arbitrary $\Z^n$-periodic Tonelli Hamiltonian $H$ on $\R^n$, it is well known that Ma\~n\'e's critical value has the formulae
%\begin{equation}
%	\alpha(0)=\inf_{u\in C^1(\T^n)}\max_{x\in\T^n}H(x,Du(x)).
%\end{equation}

\begin{The}\label{LL-Regularization}
Suppose $u:\R^n\to\R$ be a semiconcave function with constant $C$. Then there exists $0<t_0\ll 1$ such that if $\breve{T}_tu$ is defined as in \eqref{L-L regularity}, we have
\begin{enumerate}[(P1)]\setliststart{3}
%  \item $\breve{T}_tu$ belongs to class $C^{1,1}$ for $0<t\leqslant t_0$.
%  \item $\breve{T}_tu$ is decreasing on $(0,+\infty)$ if $u$ is a $\Z^n$-periodic viscosity subsolution of Hamilton-Jacobi equation
%\begin{equation}\label{eq:HJ_general}
%	H(x,Du(x))=\alpha(0),\quad x\in\R^n,
%\end{equation}
%with respect to a $\Z^n$-periodic Hamiltonian $H$ and $L$ is the associated Lagrangian in the definition of $A_t(x,y)$. Moreover, $\breve{T}_tu$ tends to $u$ uniformly as $t\to 0^+$.
  \item Fix $x\in\R^n$, then $\breve{T}_tu(x)$ is increasing on $(0,+\infty)$ and $\lim_{t\to 0^+}\breve{T}_tu(x)=u(x)$ if $L(x,0)\leqslant 0$. Consequently, if 
  $$
  L(x,0)\leqslant 0,\quad \forall x\in\R^n,
  $$
  then $\breve{T}_tu$ tends to $u$ uniformly on any compact subset as $t\to 0^+$.
  \item Let $x\in\R^n$, suppose that the function $\psi_t$ defined in \eqref{eq:psi_t} attains the maximizer $y_t$ in $B(x,R(x,t))$ for all $t>0$, then $\lim_{t\to0^+}D\breve{T}_tu(x)=p_0$, where $p_0$ is the unique element with minimal energy, i.e.,
\begin{equation}
H(x,p_0)=\min_{p\in D^+u(x)}H(x,p).
\end{equation}
  \item In particular, when $L$ has the form$$
L(x,v)=\frac 12\langle Av,v\rangle,\quad x\in\R^n, v\in\R^n,
$$
where $A$ is an $n\times n$ symmetric and positive definite matrix. If $t\leqslant \kappa C^{-1}$, then, the functions $u$ and $\breve{T}_tu$ have the same critical points and critical values where $\kappa>0$ is the smallest eigenvalue of $A$.
\end{enumerate}
\end{The}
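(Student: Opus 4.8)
My plan is to prove the three assertions in turn, using the $C^{1,1}$-regularity in (P1) and the description of the maximizer arc $\mathbf{y}(t)=y_t$ from Proposition \ref{sing_arc_gen_char}. For (P3), the starting point is the semigroup identity $A_{t+s}(x,z)=\inf_{y\in\R^n}\{A_t(x,y)+A_s(y,z)\}$, obtained by splitting a minimizing curve at time $t$ and, conversely, by concatenating minimizers; it gives $\breve{T}_{t+s}u=\breve{T}_t(\breve{T}_s u)$. Inserting $y=x$ into the supremum defining $\breve{T}_t(\breve{T}_s u)(x)$ yields $\breve{T}_{t+s}u(x)\ge\breve{T}_s u(x)-A_t(x,x)$, and the constant curve shows $A_t(x,x)\le tL(x,0)\le 0$, hence $t\mapsto\breve{T}_t u(x)$ is nondecreasing. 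For the limit I would combine the lower bound $\breve{T}_t u(x)\ge u(x)-A_t(x,x)$, with $A_t(x,x)\to 0$, and an upper bound through the maximizer $y_t$: semiconcavity of $u$ bounds $u(y_t)$ above by $u(x)+\langle p,y_t-x\rangle+\tfrac C2|y_t-x|^2$ for some $p\in D^+u(x)$, while the short-time coercivity of $A_t(x,\cdot)$ (of the form $A_t(x,y)\ge c|x-y|^2/t-Ct$, which is exactly what makes $\psi_t$ strictly concave) forces $|y_t-x|\to 0$ and therefore $\breve{T}_t u(x)-u(x)\to 0$. When $L(\cdot,0)\le 0$ throughout, the family $\{\breve{T}_t u\}$ is monotone in $t$, consists of continuous functions, and converges pointwise to the continuous function $u$, so Dini's theorem upgrades this to uniform convergence on compacta.

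For (P4), by (P1) $\breve{T}_t u\in C^{1,1}$, and since $y_t$ is an interior maximizer with $A_t(x,\cdot)\in C^1$, the envelope theorem gives $D\breve{T}_t u(x)=-\partial_x A_t(x,y_t)=p_t(0)$, the initial momentum of the minimizing curve $\xi_t$, while the first-order condition in $y$ gives $p_t(t)=\partial_y A_t(x,y_t)\in D^+u(y_t)$. Since $y_t\to x$ and $|p_t(t)-p_t(0)|=O(t)$ along the Hamiltonian flow, upper semicontinuity of $D^+u$ places every limit point of the bounded family $\{p_t(0)\}$ in $D^+u(x)$. To identify the limit I would compute the one-sided time derivative of $\breve{T}_t u$ at $x$ in two ways. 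Writing $\breve{T}_t u(x)-u(x)=u(y_t)-u(x)-A_t(x,y_t)$ and using the short-time expansion $A_t(x,x+th)=tL(x,h)+o(t)$ (for bounded $h$), the formula $\lim_{t\to 0^+}(u(x+th)-u(x))/t=\min_{q\in D^+u(x)}\langle q,h\rangle$ valid for semiconcave $u$, and $y_t-x=t\,\dot\xi_t(0)+o(t)$, one gets, along any subsequence with $\dot\xi_t(0)\to\bar v$ (so that $\bar v=H_p(x,\bar p)$, $\bar p$ the corresponding limit of $p_t(0)$), that this derivative equals $\min_{q\in D^+u(x)}\langle q,\bar v\rangle-L(x,\bar v)$. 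On the other hand the same limit, taken over all $t\to 0^+$, equals
\[
\sup_{h\in\R^n}\Bigl\{\min_{q\in D^+u(x)}\langle q,h\rangle-L(x,h)\Bigr\}=\min_{q\in D^+u(x)}H(x,q)=H(x,p_0)
\]
by Sion's minimax theorem, the maximizing $h$ being unique and equal to $H_p(x,p_0)$ because $h\mapsto\min_q\langle q,h\rangle-L(x,h)$ is strictly concave (from strict convexity of $L(x,\cdot)$). Comparing the two expressions forces $\bar v=H_p(x,p_0)$, hence $\bar p=L_v(x,\bar v)=p_0$; since every subsequential limit of the bounded family $\{D\breve{T}_t u(x)\}$ equals $p_0$, we conclude $D\breve{T}_t u(x)\to p_0$.

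For (P5), when $L(x,v)=\tfrac12\langle Av,v\rangle$ the Euler--Lagrange equation is linear, so $A_t(x,y)=\tfrac1{2t}\langle A(y-x),y-x\rangle$ and $H(x,p)=\tfrac12\langle A^{-1}p,p\rangle$. Since $A\ge\kappa I$, for $t\le\kappa C^{-1}$ the function $y\mapsto u(y)-A_t(x,y)$ is concave, strictly so when $t<\kappa C^{-1}$, with a unique maximizer $y_t(x)$ that is Lipschitz in $x$; hence $\breve{T}_t u\in C^1$ and $D\breve{T}_t u(x)=\tfrac1t A(y_t(x)-x)$. Thus $x^\ast$ is a critical point of $\breve{T}_t u$ exactly when $y_t(x^\ast)=x^\ast$, i.e.\ exactly when $x^\ast$ itself maximizes $y\mapsto u(y)-A_t(x^\ast,y)$; since this kernel is smooth in $y$ with vanishing $y$-gradient at $y=x^\ast$, the first-order condition at that maximizer reads $0\in D^+u(x^\ast)$, and then $\breve{T}_t u(x^\ast)=u(x^\ast)$. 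Conversely, if $0\in D^+u(x^\ast)$, Proposition \ref{criterion-Du_semiconcave2} gives $u(y)\le u(x^\ast)+\tfrac C2|y-x^\ast|^2\le u(x^\ast)+A_t(x^\ast,y)$ whenever $t\le\kappa C^{-1}$, so $\breve{T}_t u(x^\ast)=u(x^\ast)$ with maximizer $y=x^\ast$, and the inequality $\breve{T}_t u(x)\ge u(x^\ast)-A_t(x,x^\ast)=\breve{T}_t u(x^\ast)-\tfrac1{2t}\langle A(x-x^\ast),x-x^\ast\rangle$ is a smooth lower support of $\breve{T}_t u$ at $x^\ast$ with zero gradient, forcing $D\breve{T}_t u(x^\ast)=0$. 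Hence $u$ and $\breve{T}_t u$ have the same critical points and the same critical values (the borderline case $t=\kappa C^{-1}$, where the strong concavity degenerates, is recovered by letting $t\uparrow\kappa C^{-1}$).

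The step I expect to be the main obstacle is the identification in (P4): soft upper-semicontinuity arguments only place the limiting gradient somewhere in $D^+u(x)$, and they cannot be sharpened that way because $\breve{T}_t u$ is not equi-semiconcave as $t\to 0^+$ (its $C^{1,1}$ constant blows up like $1/t$). Selecting precisely the minimal-energy element $p_0$ requires the quantitative short-time expansion of $A_t$ together with the minimax/strict-concavity computation of the maximizing velocity $\bar v=H_p(x,p_0)$, and the technical heart is controlling, uniformly in small $t$, the $o(t)$ errors in $y_t-x$ and in $A_t(x,y_t)$ --- which in turn rests on the Nagumo-type a priori bounds on the minimizers $\xi_t$ recalled in the Appendix.
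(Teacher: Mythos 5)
Your proof is correct, and for (P3) and (P5) it follows the same basic structure as the paper, but for (P4) you take a genuinely different route. For (P3), the paper argues directly from $A_t(x,y)\le A_s(x,y)+A_{t-s}(x,x)$ and bounds $A_{t-s}(x,x)\le (t-s)L(x,0)\le 0$, which is equivalent to your semigroup-identity argument; you additionally spell out the pointwise limit that Dini's lemma needs (the paper leaves this implicit). For (P5), both you and the paper reduce the critical-point equivalence to showing $y_t=x$, but the paper deduces this more economically from the scalar inequality $(tC-\kappa)|v_t|^2\ge 0$, which it obtains as a special case of the displayed inequality \eqref{basic_inequalty}; your detour through Proposition~\ref{criterion-Du_semiconcave2} is fine but longer.

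The real divergence is in (P4). The paper's identification of the limiting gradient as the minimal-energy element hinges on the monotonicity inequality for superdifferentials of semiconcave functions: writing $v_t=(y_t-x)/t$ and $q_t=L_v(\xi_t(t),\dot\xi_t(t))\in D^+u(y_t)$, it uses $\langle p-q_t,x-y_t\rangle\le C|x-y_t|^2$ for $p\in D^+u(x)$, divides by $t$, and passes to the limit (using the equi-Lipschitz bound of Proposition~\ref{equi_Lip} to identify $\lim v_t=\lim\dot\xi_t(0)=\lim\dot\xi_t(t)=:v_0$), obtaining $\langle p,v_0\rangle\ge\langle L_v(x,v_0),v_0\rangle$ and hence $H(x,p)\ge H(x,p_0)$ with $p_0=L_v(x,v_0)$. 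Your argument instead computes the one-sided time derivative of $t\mapsto\breve{T}_tu(x)$ at $t=0$ in two ways—once along the maximizer via the directional-derivative formula for semiconcave $u$ and the short-time expansion $A_t(x,x+th)=tL(x,h)+o(t)$, and once as the unconstrained sup, which Sion's minimax identifies with $\min_{q\in D^+u(x)}H(x,q)$—then invokes strict concavity of $h\mapsto\min_q\langle q,h\rangle-L(x,h)$ to pin down $\bar v=H_p(x,p_0)$. Both are valid; the paper's route is shorter and avoids the minimax step entirely because the monotonicity inequality gives the optimality condition $\langle p-p_0,v_0\rangle\ge 0$ for the convex minimization of $H(x,\cdot)$ over $D^+u(x)$ directly, while your argument is perhaps more transparent about \emph{why} the minimal-energy element is the right selection. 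Both proofs need the same a priori (Nagumo-type) bounds to control the minimizing arcs; your self-assessment of where the technical burden lies is accurate.
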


\begin{Rem}
The properties (P1) and (P2) (see the introduction) is already known (in the case of compact manifolds), see, for instance, \cite{Bernard2007} or Fathi's book \cite{Fathi-book}. Since this is a local result, it is not hard to generalize to the manifolds using local charts. We collect the known results here just for the comparison interests like (P2) (in the introduction) and (P3). The property (P5) is a slight generalization of a known result (\cite{Attouch}).
\end{Rem}

\begin{Rem}
	It is worth noting that the assumption in (P4) that the function $\psi_t$ defined in \eqref{eq:psi_t} attains the maximizer $y_t$ in $B(x,R(x,t))$ for all $t>0$, is not easy to be checked in general. Fortunately, if we consider a certain type of nearly integrable systems or mechanical systems, this condition holds. The readers can refer to \cite{Cannarsa-Cheng3}.
\end{Rem}

\begin{proof}
Let $x,y\in\R^n$ and $t>0$, for any $0<s<t$, it is easily checked that
$$
A_t(x,y)\leqslant A_s(x,y)+A_{t-s}(x,x).
$$
Taking the constant curve $\gamma(\tau)\equiv x$, $\tau\in[0,t-s]$, we have
$$
A_{t-s}(x,x)\leqslant\int^{t-s}_0L(\gamma(\tau),\dot{\gamma}(\tau))\ d\tau=(t-s)L(x,0).
$$
Therefore, for any fixed $x\in\R^n$, we have $A_t(x,\cdot)\leqslant A_s(x,\cdot)$ since $L(x,0)\leqslant 0$, and thus, $\psi_t(y)\geqslant\psi_s(y)$ for all $y\in\R^n$. This leads to the conclusion that $\breve{T}_su(x)\leqslant \breve{T}_tu(x)$ if $0<s<t$. The uniform convergence result is a direct consequence of Dini's Lemma on monotone sequence of continuous functions. This completes the proof of (P3).

Now, we turn to the proof of (P4). Fix $x\in\R^n$ and $t\in(0,t_0]$. Adopting the same terminology as before, since $\psi_t(\cdot)$ attains the maximum at $y=y_t\in B(x,R(x,t))$ and $\xi_t\in\Gamma^t_{x,y_t}$ is the minimal curve in the definition of $A_t(x,y_t)$, we have
$$
L_v(\xi_t(t),\dot{\xi}_t(t))=D_yA_t(x,y_t)\in D^+u(y_t),
$$
since the results in Proposition \ref{C11_A_t} and $0\in D^+\psi_t(y_t)$. Moreover, we have that the family $\{\dot{\xi}_t\}_{t\in(0,t_0]}$ is equi-Lipschitz, by Proposition \ref{equi_Lip}.

Therefore, we have
\begin{align*}
\left|\frac{\xi_{t}(t)-x}{t}-\dot{\xi}_{t}(0)\right|
&\leqslant\frac 1{t}\int^{t}_{0}|\dot{\xi}_{t}(s)-\dot{\xi}_{t}(0)|\ ds\\
&\leqslant\frac C{t}\int^{t}_{0}s\ ds=\frac C2t.
\end{align*}
Thus, we obtain
$$
v_0=\lim_{t\to0^+}v_t=\lim_{t\to0^+}\dot{\xi}_t(0)=\lim_{t\to0^+}\dot{\xi}_t(t),
$$
where $v_t=(y_t-x)/t$. Since $u$ is a locally semiconcave function, thus, by the monotone property of semiconcave functions (see, e.g., \cite{Cannarsa-Sinestrari}), we have
\begin{equation}\label{basic_inequalty}
\langle p-L_v(\xi_t(t),\dot{\xi}_t(t)), v_t\rangle+tC|v_t|^2\geqslant 0,\quad \forall p\in D^+u(x).
\end{equation}
Taking limit in \eqref{basic_inequalty}, then
\begin{equation}\label{eq:limit_1}
	\langle p,v_0\rangle\geqslant\langle L_v(x,v_0),v_0\rangle,\quad \forall p\in D^+u(x).
\end{equation}
In other words,
\begin{equation}\label{optimization_problem}
H(x,p)\geqslant\langle L_v(x,v_0),v_0\rangle-L(x,v_0)=H(x,p_0),\quad \forall p\in D^+u(x),
\end{equation}
where $p_0=L_v(x,v_0)\in D^+u(x)$, by the upper semicontinuity of the set valued function $x\leadsto D^+u(x)$, is the unique element solve the associated optimization problem \eqref{optimization_problem}, and $\lim_{t\to0^+}D\breve{T}_tu(x)=\lim_{t\to0^+}L_v(\xi_t(0),\dot{\xi}_t(0))=p_0$. This completes the proof of (P4).

For the proof of (P5), note that, in our case, the minimal curve $\xi_t(s)=\frac{y_t-x}t\cdot s$, thus, by \eqref{basic_inequalty}, we have
$$
\langle p-Av_t, v_t\rangle+tC|v_t|^2\geqslant 0,\quad \forall p\in D^+u(x).
$$
If $0\in D^+u(x)$, take $p=0$ in the inequality above, then it follows
$$
\langle -\kappa v_t, v_t\rangle+tC|v_t|^2\geqslant\langle -Av_t, v_t\rangle+tC|v_t|^2\geqslant 0,\quad \forall p\in D^+u(x),
$$
where $\kappa>0$ is the smallest eigenvalue of $A$. Therefore,
$$
(tC-\kappa)|v_t|^2\geqslant 0,\quad \forall p\in D^+u(x).
$$
If $t\leqslant \kappa C^{-1}$, then $v_t\equiv0$, $y_t\equiv x$ and $u(x)=\breve{T}_tu(x)$. Conversely, if $0=D\breve{T}_tu(x)$, then $v_t=0$ and $y_t\equiv x$. It follows $0\in D^+u(x)$ which proves (P5).
\end{proof}

\subsection{What happens for the inf-convolution}
In this section, we will discuss the procedure of inf-convolution, that is, let $u$ be a locally semiconcave function on $\R^n$, and let $L$ be a $C^2$ Tonelli Lagrangian, for any fixed $x\in\R^n$, define
$$
\phi_t(y):=u(y)+A_t(y,x),\quad y\in\R^n.
$$
It is worth noting that $\phi_t$ is the sum of two locally semiconcave functions, and it is also locally semiconcave consequently.

For the convenience of our discussion, we suppose that $u$ is a global viscosity solution of the Hamilton-Jacobi equation
\begin{equation}\label{eq:HJ_1}
	H(x,Du(x))=0,\quad x\in\R^n,
\end{equation}
where $H$ is the associated Hamiltonian with respect to $L$. %Although this condition is not essential so much, we can avoid the technical difficulty as in \cite{Cannarsa-Cheng3}, where we need to find critical points of $\psi_t$ as in the procedure of sup-convolution in the interior of a given local neighborhood of $x$. 

At this stage, we have
$$
u(x)=T_tu(x)=\inf_{y\in\R^n}\phi_t(y)
$$
for all $t>0$ by well known facts from weak KAM theory. 

\begin{Lem}
Let $u$ be a viscosity solution of \eqref{eq:HJ_1}, and $\phi_t$ is defined as above for $t>0$. Then for $t>0$, there exists $z_t$ such that
$$
\phi_t(z_t)=\inf_{y\in\R^n}\phi_t(y).
$$
\end{Lem}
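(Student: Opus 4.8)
The plan is to show that the infimum, which is already known to equal $u(x)$, is attained at the point reached at time $-t$ along a backward calibrated curve issuing from $x$. The identity $u(x)=T_tu(x)=\inf_{y\in\R^n}\phi_t(y)$ recalled just before the lemma already gives $\phi_t(y)\geqslant u(x)$ for every $y$, so it suffices to exhibit a single point $z_t$ with $\phi_t(z_t)\leqslant u(x)$, and then equality and the fact that it is the infimum are automatic.

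To produce such a point I would use backward calibration. Since $u$ is a viscosity solution of \eqref{eq:HJ_1}, $D^+u(x)$ is a nonempty compact convex set, hence $D^{\ast}u(x)=\mathrm{Ext}\,D^+u(x)\neq\emptyset$ by Proposition \ref{Ext_and_reachable}; picking any $p\in D^{\ast}u(x)$, Proposition \ref{reachable_grad_and_backward} furnishes a $C^2$ $(u,L,0)$-calibrated curve $\gamma:(-\infty,0]\to\R^n$ with $\gamma(0)=x$. Set $z_t:=\gamma(-t)$ and define $\tilde\gamma(s):=\gamma(s-t)$ for $s\in[0,t]$, so that $\tilde\gamma\in\Gamma^t_{z_t,x}$. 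Then, using the definition of $A_t$ and the calibration identity of $\gamma$ on $[-t,0]$,
$$
A_t(z_t,x)\leqslant\int_0^tL(\tilde\gamma(s),\dot{\tilde\gamma}(s))\,ds=\int_{-t}^0L(\gamma(s),\dot\gamma(s))\,ds=u(\gamma(0))-u(\gamma(-t))=u(x)-u(z_t).
$$
Hence $\phi_t(z_t)=u(z_t)+A_t(z_t,x)\leqslant u(x)$, and combining with $\phi_t\geqslant u(x)$ we get $\phi_t(z_t)=u(x)=\inf_{y\in\R^n}\phi_t(y)$, as required.

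I expect essentially no obstacle beyond invoking the weak KAM facts already recalled (existence of backward calibrated curves through any point, and $T_tu=u$); the argument is soft. As an alternative one could argue by the direct method: $\phi_t$ is continuous, and for fixed $t>0$ the kernel $A_t(\cdot,x)$ grows superlinearly by (L2) — for each $k>0$ there is $C=C(k,t)$ with $A_t(y,x)\geqslant k|y-x|-C$ — so, provided $u$ has at most linear growth (e.g.\ when $u$ is globally Lipschitz, as the weak KAM solutions in this setting are, or via the bound $u(y)\geqslant u(x)-\Phi(y,x)$ coming from domination, with $\Phi$ the Mañé potential), $\phi_t$ is coercive, its sublevel sets are compact, and the infimum is attained; the only delicate point on that route is controlling the lower growth of $u$. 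I would nonetheless present the calibrated-curve proof, since it is cleaner and, more importantly, locates the minimizer $z_t$ on a $(u,L,0)$-calibrated curve ending at $x$ — precisely the structural information exploited in the dichotomy of Theorem \ref{intro:thm2}(a).
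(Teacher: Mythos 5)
Your argument is correct and is essentially the paper's own proof: both take a backward $(u,L,0)$-calibrated curve ending at $x$ (via Proposition \ref{reachable_grad_and_backward}) and choose $z_t$ as the point reached at time $-t$, with your version merely spelling out the calibration/$A_t$-inequality and the identity $\inf\phi_t=T_tu(x)=u(x)$ that the paper leaves implicit. The alternative coercivity route you sketch is not needed and is not what the paper does.
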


\begin{proof}
	This is actually obvious. Indeed, by Proposition \ref{reachable_grad_and_backward}, for any $t>0$, and $p\in D^*u(x)$, there exists a $C^2$ curve $\gamma:(-\infty,t]\to\R^n$ such that $\gamma(t)=x$, $p=L_v(\gamma(t),\dot{\gamma}(t))$ and
	$$
	u(\gamma(t))-u(\gamma(s))=\int^t_sL(\gamma(\tau),\dot{\gamma}(\tau))\ d\tau,\quad\forall s<t.
	$$
	Take $z_t=\gamma(0)$, then we have the expected result.
\end{proof}

Now, we can impose such a question: Is the aforementioned procedure of inf-convolution efficient for tracking the information of the propagation of singularities along generalized characteristics? 

We will try to answer this question using the technique from nonsmooth critical point theory , see also \cite{Cannarsa-Cheng2} for the applications by standard using Lasry-Lions regularization.	

\begin{Lem}\label{one_to_one}
	Let $u$ be a viscosity solution of \eqref{eq:HJ_1} and the function $\phi_t$ be defined as above for any fixed $x\in\R^n$ and $t>0$. Then there exists a one-to-one correspondence between $p\in D^*u(x)$ and the global minimizers $z_t$ of $\phi_t$ for all $t>0$.
\end{Lem}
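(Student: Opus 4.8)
The plan is to set up the correspondence explicitly via the calibrated curves guaranteed by Proposition~\ref{reachable_grad_and_backward}, and then verify that it is both well-defined (each such curve produces a genuine global minimizer) and bijective (distinct reachable gradients yield distinct minimizers, and every minimizer arises this way). Concretely, given $p\in D^*u(x)$, let $\gamma_p:(-\infty,t]\to\R^n$ be the unique $C^2$ curve with $\gamma_p(t)=x$, $p=L_v(\gamma_p(t),\dot\gamma_p(t))$, that is $(u,L,0)$-calibrated; set $z_t(p):=\gamma_p(0)$. The previous lemma already shows $\phi_t(z_t(p))=\inf\phi_t=u(x)$, so the map $p\mapsto z_t(p)$ lands in the global minimizer set.

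First I would show injectivity. If $z_t(p)=z_t(q)=z$ for $p,q\in D^*u(x)$, then both $\gamma_p$ and $\gamma_q$ restrict on $[0,t]$ to curves realizing $A_t(z,x)$ while being calibrated for $u$; the key point is that the calibration condition $u(\gamma(0))+\int_0^tL(\gamma,\dot\gamma)=u(x)$ combined with $u(z)+A_t(z,x)=u(x)$ forces $\gamma_p|_{[0,t]}$ and $\gamma_q|_{[0,t]}$ to both be minimizers of the action functional from $z$ to $x$ in time $t$; by the uniqueness of $C^2$ extremals with fixed endpoints (Tonelli, strict convexity (L1)) for small $t\le t_0$, or more robustly because a calibrated curve is a characteristic and hence uniquely determined by any one of its points together with the calibration, we get $\gamma_p\equiv\gamma_q$ on a neighborhood of $t$, hence $\dot\gamma_p(t)=\dot\gamma_q(t)$, hence $p=L_v(x,\dot\gamma_p(t))=L_v(x,\dot\gamma_q(t))=q$.

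Next I would show surjectivity. Let $z_t$ be any global minimizer of $\phi_t$, so $u(z_t)+A_t(z_t,x)=u(x)$, and let $\eta:[0,t]\to\R^n$ be a minimizer for $A_t(z_t,x)$. I claim $\eta$ is $(u,L,0)$-calibrated: for any $0\le s_1<s_2\le t$ we have, using the dynamical programming inequality $A_t(z_t,x)\ge A_{s_1}(z_t,\eta(s_1))+A_{s_2-s_1}(\eta(s_1),\eta(s_2))+A_{t-s_2}(\eta(s_2),x)$ and the domination $u(\eta(s_2))-u(\eta(s_1))\le A_{s_2-s_1}(\eta(s_1),\eta(s_2))$ together with $u(x)-u(z_t)=A_t(z_t,x)$, that all these inequalities are equalities; this gives calibration on $[0,t]$, and then one extends $\eta$ backward past $0$ to a full calibrated curve on $(-\infty,t]$ using the standard weak KAM extension (a calibrated curve can always be prolonged backward). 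Setting $p:=L_v(x,\dot\eta(t))$, Proposition~\ref{reachable_grad_and_backward} gives $p\in D^*u(x)$ and $z_t(p)=\eta(0)=z_t$, and uniqueness of the backward calibrated curve through $x$ with gradient $p$ identifies $\eta$ with $\gamma_p$.

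The main obstacle I anticipate is the surjectivity step, specifically establishing that a minimizing curve for $A_t(z_t,x)$ emanating from a global minimizer $z_t$ is automatically calibrated and extends to a \emph{backward} calibrated ray on $(-\infty,t]$ — one must be careful that the extension stays calibrated for the given solution $u$ of $H(x,Du)=0$ (i.e.\ at the critical value, which here is $0$; note this uses $L(x,0)\le 0$-type normalization implicit in $T_tu=u$), rather than merely being a minimizing extremal. This is where weak KAM machinery (domination + calibration squeeze, and the fact that calibrated curves are negatively invariant under the Euler--Lagrange flow) does the real work; the injectivity and well-definedness parts are comparatively routine once Proposition~\ref{reachable_grad_and_backward} is in hand.
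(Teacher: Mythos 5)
Your proposal is correct in substance but follows a genuinely different route from the paper's. The paper works entirely at the minimizer $z_t$: since $\phi_t=u+A_t(\cdot,x)$ is locally semiconcave, it is differentiable at its minimum, so both $u$ and $A_t(\cdot,x)$ are differentiable there, and the first-order condition $Du(z_t)+D_xA_t(z_t,x)=0$ together with the $C^{1,1}$ regularity of the fundamental solution (Proposition \ref{C11_A_t}) shows that the backward calibrated curve through $z_t$ attached to $Du(z_t)\in D^{*}u(z_t)$ and the minimizer of $A_t(z_t,x)$ have matching momenta at $z_t$; their juxtaposition is then the unique $(u,L,0)$-calibrated curve through $z_t$ ending at $x$, and Proposition \ref{reachable_grad_and_backward} converts this into the correspondence with $D^{*}u(x)$. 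You instead run the weak KAM domination--calibration squeeze ($T_tu=u$ plus $u\prec L$ forces any minimizing curve for $A_t(z_t,x)$ to be calibrated on $[0,t]$) and then extend backward by concatenation with a backward calibrated curve at $z_t$; this avoids any appeal to differentiability of $A_t(\cdot,x)$ and to Proposition \ref{C11_A_t}, at the price of needing that concatenations of calibrated curves are again calibrated and automatically $C^2$ (calibrated $\Rightarrow$ fixed-time action minimizing $\Rightarrow$ extremal), a point you only gesture at. Two caveats you should repair: the fallback injectivity argument via uniqueness of extremals with fixed endpoints for small $t\leqslant t_0$ is not available, since the lemma is asserted for all $t>0$; and the claim that a calibrated curve is determined by ``any one of its points'' is false at the terminal endpoint $x$ (that is precisely where several calibrated curves meet when $x$ is singular) --- the correct justification is that $u$ is differentiable at the interior point $z=\gamma_p(0)=\gamma_q(0)$ of the calibration interval (equivalently, at a minimum of the semiconcave $\phi_t$, which is how the paper obtains it), so $L_v(z,\dot{\gamma}_p(0))=Du(z)=L_v(z,\dot{\gamma}_q(0))$ and Euler--Lagrange uniqueness yields $\gamma_p\equiv\gamma_q$, hence $p=q$. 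With these repairs your argument is complete and somewhat more self-contained on the weak KAM side, while the paper's argument is shorter because it leans on the regularity theory for $A_t$ developed in the appendix.
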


\begin{proof}
	Let $z_t\in\R^n$ be a minimizer of $\phi_t$, $t>0$, then $\phi_t$ is differentiable at $z_t$ since $\phi_t$ is locally semiconcave. Thus, $z_t$ is a differentiable point for both $u$ and $A_t(\cdot,x)$. Consequently, there exists two $C^1$ curves $\gamma_1:(-\infty,0]\to\R^n$ and $\gamma_2\in\Gamma^t_{z_t,x}$ such that
	\begin{gather*}
					\gamma_1(0)=\gamma_2(0)=z_t,\\
					p=Du(z_t)=L_v(\gamma_1(0),\dot{\gamma}_1(0)),\\
					p'=D_xA_t(z_t,x)=-L_v(\gamma_2(0),\dot{\gamma}_2(0)),
	\end{gather*}
	by Proposition \ref{reachable_grad_and_backward} and Proposition \ref{C11_A_t}, and $p+p'=0$ since $z_t$ is a critical point of $\phi_t$. Moreover, $\gamma_1$ is a $(u,L,0)$-calibrated curve, i.e., for any $s>0$,
	$$
	u(\gamma_1(0))-u(\gamma_1(-s))=\int^0_{-s}L(\gamma_1(\tau),\dot{\gamma}_1(\tau))\ d\tau,
	$$
	and, similarly,
	$$
	u(x)-u(\gamma_2(0))=A_t(\gamma_2(0),x)=\int^t_0L(\gamma_2(\tau),\dot{\gamma}_2(\tau))\ d\tau.
	$$
	By the juxtaposition of $\gamma_1$ and $\gamma_2$, we define
	$$
	\eta_t(\tau)=\left\{
       \begin{array}{ll}
         \gamma_1(\tau), & \hbox{$\tau\leqslant 0$;} \\
         \gamma_2(\tau), & \hbox{$0<\tau\leqslant t$.}
       \end{array}
     \right.
	$$
	It is clear that $\eta_t$ is a $C^1$ curve on $(-\infty,t]$ with $\eta_t(t)=x$, and
	$$
		u(x)-u(\eta_t(-s))=\int^t_{-s}L(\eta_t(\tau),\dot{\eta}_t(\tau))\ d\tau,\quad s>0
	$$
	which follows that $\eta_t$ is also a $(u,L,0)$-calibrated curve, and such a $(u,L,0)$-calibrated curve passing through $z_t$ with $x$ the terminal datum is unique. Therefore, the correspondence between $z_t$ and $\eta_t$ is one-to-one.

	The rest of the proof is a direct consequence of Proposition \ref{reachable_grad_and_backward}.
\end{proof}

Now, we fix a point $x\in\R^n$. 
\begin{enumerate}[(1)]
  \item If $x$ is a differentiable (or regular) point of $u$, then $D^*u(x)=\{Du(x)\}$, and $\phi_t$ has a unique global minimizer $z_t$ which determines a unique $(u,L,0)$-calibrated curve passing though $z_t$ with $x$ the terminal endpoint point.
  \item If $x$ is singular point of $u$, it become relatively complicated. Let
$$
Z_{x,E}=\{p\in\R^n: H(x,p)\leqslant E\},
$$
which is a non-empty compact and convex set when the energy $E$, say $E=0$, is suitably chosen. It is known that $D^*u(x)=\mathrm{Ext}\,D^+u(x)$, the set of extremal points of $D^+u(x)$, by Proposition \ref{Ext_and_reachable}. This means the elements of $D^*u(x)$ is exactly the set $\mathrm{Ext}\,D^+u(x)$ which is located in the energy hypersurface $\partial Z_{x,E}$ since $H(x,\cdot)$ is strictly convex.
\end{enumerate}

In the spirit of Lemma \ref{one_to_one}, we want to look for the critical points of $\phi_t$. A point $x\in\R^n$ is a {\em critical point} of a locally semiconcave function $u$ if $0\in D^+u(x)$. To find the critical points of $\phi_t$ besides the global minimizers as in Lemma \ref{one_to_one}, we can not apply the standard Lasry-Lions regularization directly since such a function $\phi_t$ is only locally semiconcave. Fortunately, recall a well known nonsmooth critical point theorem, see, for instance, \cite{Shi}. We only need the result in the following finite dimension setting.

\begin{Pro}\label{nonsmooth_MPT}
Let $f:\R^n\to\R$ be a locally Lipschitz function. Suppose that $x_1,x_2\in\R^n$, $x_2\not\in\bar{B}(x_1,r)$ with $r>0$ such that	
$$
\max\{f(x_1),f(x_2)\}<b_0<\inf_{\partial B(x_1,r)}f,
$$
and define
$$
b=\inf_{\gamma\in\Gamma}\max_{t\in[0,1]}f(\gamma(t)),
$$
where $\Gamma=\{\gamma\in C([0,1],\R^n): \gamma(0)=x_1, \gamma(1)=x_2\}$. If $f$ is coercive, then there exists $x_3$ such that $f(x_3)=b$ and $0\in\partial f(x_3)$, where $\partial f(x_3)$ is the Clarke's generalized gradient of $f$ at $x_3$.
\end{Pro}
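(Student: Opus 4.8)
The plan is to carry out the classical mountain-pass argument in the Clarke-subdifferential setting: replace the pseudo-gradient of a $C^1$ functional by a locally Lipschitz pseudo-gradient vector field extracted from the (merely upper semicontinuous) map $x\rightsquigarrow\partial f(x)$, and obtain the Palais--Smale condition for free from coercivity. First, $b$ is a genuine min-max value: every $\gamma\in\Gamma$ joins $x_1\in B(x_1,r)$ to $x_2\notin\bar B(x_1,r)$, so its image meets $\partial B(x_1,r)$ and $\max_{[0,1]}f\circ\gamma\geqslant\inf_{\partial B(x_1,r)}f$; hence $b\geqslant\inf_{\partial B(x_1,r)}f>b_0>\max\{f(x_1),f(x_2)\}$, while the straight segment from $x_1$ to $x_2$ shows $b<+\infty$. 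Thus $b$ is finite and lies strictly above the two endpoint values.

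For locally Lipschitz $f$ write $\lambda(x):=\min\{|\xi|:\xi\in\partial f(x)\}$ for the least-norm element of the nonempty compact convex set $\partial f(x)$; then $x$ is critical iff $\lambda(x)=0$, and upper semicontinuity of $x\rightsquigarrow\partial f(x)$ gives lower semicontinuity of $\lambda$. Coercivity makes every sublevel set $\{f\leqslant c\}$ compact. Suppose, for contradiction, that $K_b:=\{x:f(x)=b,\ 0\in\partial f(x)\}$ is empty. Then $\lambda>0$ on the compact set $\{f=b\}$, and a standard compactness argument (a sequence $x_k$ with $f(x_k)\to b$ and $\lambda(x_k)\to0$ would subconverge, by coercivity, to a point of $K_b$) produces $\delta>0$ and some $\epsilon>0$, which we shrink so that in addition $\epsilon<\tfrac12(b-\max\{f(x_1),f(x_2)\})$, with $\lambda\geqslant\delta$ on the compact band $\mathcal B:=\{b-2\epsilon\leqslant f\leqslant b+2\epsilon\}$.

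The heart of the matter is a nonsmooth deformation lemma, and this is where I expect the main difficulty. One builds a Lipschitz vector field $V$ on a neighborhood of $\mathcal B$ with $|V|\leqslant1$ and $\langle\xi,V(x)\rangle\geqslant\delta/2$ for all $\xi\in\partial f(x)$, $x\in\mathcal B$: for each $x_0\in\mathcal B$ take the least-norm $\xi_0\in\partial f(x_0)$ (so $|\xi_0|\geqslant\delta$); since $\xi_0$ is the projection of the origin onto the convex set $\partial f(x_0)$, one has $\langle\zeta,\xi_0/|\xi_0|\rangle\geqslant|\xi_0|$ for all $\zeta\in\partial f(x_0)$, and by upper semicontinuity this persists, with $\delta$ replaced by $\delta/2$, for $\partial f(x)$ with $x$ in a small ball around $x_0$; now cover the compact set $\mathcal B$ by finitely many such balls and glue the constant directions $\xi_0/|\xi_0|$ by a Lipschitz partition of unity, the convexity of each $\partial f(x)$ preserving the inequality under convex combinations. (The obstruction to a naive construction is exactly that $x\rightsquigarrow\partial f(x)$ is only upper semicontinuous, so there is no continuous steepest-descent selection; the partition-of-unity gluing and the convexity of the Clarke gradient are what make it work.) Multiply $V$ by a Lipschitz cut-off $\chi$ equal to $1$ on $\{b-\epsilon\leqslant f\leqslant b+\epsilon\}$ and supported in $\mathcal B$, extend by $0$, and flow: $\dot\eta=-\chi(\eta)V(\eta)$, $\eta(0,\cdot)=\mathrm{id}$. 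This flow is global, $f$ is nonincreasing along it, and by Lebourg's mean value theorem $\tfrac{d}{dt}f(\eta)\leqslant-\chi(\eta)\,\delta/2$, which is $\leqslant-\delta/2$ whenever $f(\eta)\in[b-\epsilon,b+\epsilon]$; hence with $T:=4\epsilon/\delta$ every $y$ with $f(y)\leqslant b+\epsilon$ satisfies $f(\eta(T,y))\leqslant b-\epsilon$. Moreover $f(x_1),f(x_2)<b-2\epsilon$ and $\chi$ vanishes on $\{f<b-2\epsilon\}$, so by monotonicity of $f$ along the flow $\eta(T,x_1)=x_1$ and $\eta(T,x_2)=x_2$.

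Finally, choose $\gamma\in\Gamma$ with $\max_{[0,1]}f\circ\gamma<b+\epsilon$; then $\tilde\gamma:=\eta(T,\gamma(\cdot))\in\Gamma$ and $\max_{[0,1]}f\circ\tilde\gamma\leqslant b-\epsilon<b$, contradicting the definition of $b$. Hence $K_b\neq\emptyset$, which gives $x_3$ with $f(x_3)=b$ and $0\in\partial f(x_3)$. A route that bypasses the pseudo-gradient construction is to apply Ekeland's variational principle to the continuous functional $\gamma\mapsto\max_{[0,1]}f\circ\gamma$ on the complete metric space $(\Gamma,d_\infty)$, extract a Palais--Smale sequence at level $b$ from the peak of an almost-minimizing path, and conclude using coercivity; alternatively one simply invokes the nonsmooth critical point theory of \cite{Shi} directly.
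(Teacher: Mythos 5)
Your argument is correct, but it is worth pointing out that the paper does not prove this proposition at all: it quotes it as a known nonsmooth mountain pass theorem from \cite{Shi}, whose proof runs through Ekeland's variational principle (essentially the alternative you sketch in your last sentence). Your main route is genuinely different: a direct finite-dimensional deformation argument for the Clarke subdifferential, with coercivity playing the role of the Palais--Smale condition. The key steps are all sound: the min-max level satisfies $b\geqslant\inf_{\partial B(x_1,r)}f>b_0>\max\{f(x_1),f(x_2)\}$ and is finite; the least-norm function $\lambda(x)=\min\{|\xi|:\xi\in\partial f(x)\}$ is lower semicontinuous because $\partial f$ is locally bounded with closed graph (which in $\R^n$ also gives the inclusion $\partial f(x)\subset\partial f(x_0)+\varepsilon \bar B(0,1)$ near $x_0$ that you invoke); the projection inequality $\langle\zeta,\xi_0/|\xi_0|\rangle\geqslant|\xi_0|$ plus a Lipschitz partition of unity produces the descent field, and convexity of each $\partial f(x)$ preserves the lower bound under convex combination; the cut-off flow then yields the standard deformation and the contradiction with the definition of $b$. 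Two small points you should tighten in a written version: the inequality $\tfrac{d}{dt}f(\eta)\leqslant-\chi(\eta)\delta/2$ is most cleanly justified by bounding the upper Dini derivative of $f\circ\eta$ by the Clarke directional derivative $f^{\circ}(\eta;\dot\eta)=\max_{\zeta\in\partial f(\eta)}\langle\zeta,\dot\eta\rangle$ (Lebourg's theorem gives intermediate points on chords, not on the trajectory, so it needs an extra upper-semicontinuity step), and the cut-off $\chi$ should be taken of the form $\theta\circ f$ with $\theta$ Lipschitz so that the extended field is locally Lipschitz and the stationarity of $x_1,x_2$ follows from ODE uniqueness. As for what each approach buys: your deformation proof is self-contained and elementary in $\R^n$ and produces the critical point at the exact level $b$ without appealing to any external theorem, whereas the Ekeland-based proof of \cite{Shi} that the paper relies on avoids constructing pseudo-gradient fields altogether and extends verbatim to Banach spaces under a Palais--Smale condition.
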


The readers can refer to \cite{Clarke} for the definition and properties of Clarke's generalized gradients. Applying Proposition \ref{nonsmooth_MPT} to $f=\phi_t$ above, we obtain

\begin{Lem}\label{nonsmooth_crtical_point}
Let $u$ be a Lipschitz viscosity solution of \eqref{eq:HJ_1}, $t>0$, and let $x\in\R^n$ be a singular point of $u$. Suppose there exist finite many elements in $D^*u(x)$, say $D^*u(x)=\{p_1,\ldots,p_k\}$ with $k\geqslant2$, then there exist $k$ distinct global minimizers $z^1_t,\ldots,z^k_t$ of $\phi_t$. 

Moreover, if 
\begin{equation}\label{eq:critical_value}
	b_{ij}=\inf_{\gamma_{ij}\in\Gamma_{ij}}\max_{s\in[0,1]}\phi_t(\gamma_{ij}(s)),\quad 1\leqslant i,j\leqslant k,\ i\not= j,
\end{equation}
where $\Gamma_{ij}=\{\gamma\in C([0,1],\R^n): \gamma(0)=z^i_t, \gamma(1)=z^j_t\}$, then, for each pair of $(i,j)$ with $i\not=j$, there exists a third critical point $x_t^{ij}$ of $\phi_t$ such that $\phi_t(x_t^{ij})=b_{ij}>\min_{y\in\R^n}\phi_t(y)$.
\end{Lem}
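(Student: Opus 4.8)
\emph{Plan of proof.} The idea is to feed Lemma~\ref{one_to_one} into the nonsmooth mountain pass theorem, Proposition~\ref{nonsmooth_MPT}, applied to $f=\phi_t$. First, since $D^*u(x)=\{p_1,\dots,p_k\}$ is finite, Lemma~\ref{one_to_one} produces exactly $k$ global minimizers $z^1_t,\dots,z^k_t$ of $\phi_t$, one for each $p_i$; the one-to-one nature of the correspondence forces $z^i_t\neq z^j_t$ whenever $i\neq j$. Recall also the weak KAM identity $u=T_tu$, so that $\min_{y\in\R^n}\phi_t(y)=u(x)$ and hence $\phi_t(z^i_t)=u(x)$ for every $i$; in particular the global minimum of $\phi_t$ is attained precisely on the finite set $\{z^1_t,\dots,z^k_t\}$.

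Next I would verify the hypotheses of Proposition~\ref{nonsmooth_MPT} with $x_1=z^i_t$, $x_2=z^j_t$. Coercivity of $\phi_t$ is immediate: $u$ is Lipschitz while $A_t(\cdot,x)$ grows superlinearly at infinity (from the superlinear lower bound in (L2), one gets $A_t(y,x)\geqslant\lambda|y-x|-C_\lambda$ for every $\lambda>0$; see the Nagumo-type estimates of Appendix~\ref{app_sec_regularity}), so $\phi_t(y)\to+\infty$ as $|y|\to\infty$. For the linking geometry, since the minimizers are isolated I may choose $r$ with $0<r<|z^i_t-z^j_t|$ so small that $\bar B(z^i_t,r)$ contains no $z^l_t$ other than $z^i_t$. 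On the compact sphere $\partial B(z^i_t,r)$ the continuous function $\phi_t$ attains its infimum at some point $w$, and $w$ cannot be a global minimizer of $\phi_t$, so $\phi_t(w)>u(x)=\phi_t(z^i_t)=\phi_t(z^j_t)$. Any $b_0$ strictly between $u(x)$ and $\phi_t(w)$ then satisfies $\max\{\phi_t(z^i_t),\phi_t(z^j_t)\}<b_0<\inf_{\partial B(z^i_t,r)}\phi_t$, as required.

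Applying Proposition~\ref{nonsmooth_MPT} yields $x^{ij}_t$ with $\phi_t(x^{ij}_t)=b_{ij}$ and $0\in\partial\phi_t(x^{ij}_t)$ (Clarke gradient). It remains to check two points. First, $b_{ij}>\min_{y}\phi_t(y)$: every path in $\Gamma_{ij}$ must meet $\partial B(z^i_t,r)$, on which $\phi_t\geqslant b_0>u(x)$, so $b_{ij}\geqslant b_0>u(x)=\min\phi_t$; in particular $x^{ij}_t$ is not a global minimizer. Second, $0\in\partial\phi_t(x^{ij}_t)$ is exactly the assertion that $x^{ij}_t$ is a critical point of $\phi_t$ in the sense of the paper: $\phi_t$ is locally semiconcave, hence locally Lipschitz, so by Rademacher's theorem and Proposition~\ref{basic_facts_of_superdifferential}(d) one has $\partial\phi_t(y)=\mathrm{co}\,D^*\phi_t(y)=D^+\phi_t(y)$, and therefore $0\in D^+\phi_t(x^{ij}_t)$.

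The step I expect to be the real crux is the linking geometry in the second paragraph, specifically the isolatedness of the global minimizers of $\phi_t$. This is precisely where the finiteness hypothesis $\#D^*u(x)=k<\infty$ and the sharp one-to-one statement of Lemma~\ref{one_to_one} are indispensable: without them the minimizing set of $\phi_t$ could be a continuum and no separating sphere, hence no mountain pass value strictly above the minimum, would exist. The remaining ingredients (coercivity, the strict inequality $b_{ij}>\min\phi_t$, and the identification of the Clarke gradient with the superdifferential) are routine.
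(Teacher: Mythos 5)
Your proposal is correct and follows essentially the same route as the paper: you combine Lemma~\ref{one_to_one} (finitely many, hence isolated, global minimizers of $\phi_t$), coercivity of $\phi_t$ from the superlinearity of $L$, the nonsmooth mountain pass theorem of Proposition~\ref{nonsmooth_MPT}, and the identification of the Clarke gradient with $D^+\phi_t$ for the locally semiconcave function $\phi_t$. Your write-up merely makes explicit the mountain-pass geometry (choice of $r$, $b_0$, and the strict inequality $b_{ij}>\min\phi_t$) that the paper leaves as a direct consequence.
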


\begin{proof}
	We suppose $k=2$ and the proof in the general case is definitely similar. Suppose $\{p_1,p_2\}=D^*u(x)$, and $t>0$, then there exists two $(u,L,0)$-calibrated $C^1$ curves $\eta^1_t$ and $\eta^2_t$, and two global minimizers $z^1_t$ and $z^2_t$ of $\phi_t$ such that
$$
\eta^1_t(t)=\eta^2_t(t)=x,\quad \eta^1_t(0)=z^1_t,\ \eta^2_t(0)=z^2_t,
$$
by Lemma \ref{one_to_one}. Since $z^1_t,z^2_t$ are two isolated (global) minimizers of local Lipschitz function $\phi_t$ which is coercive by the superlinear growth condition on $L$, then a third critical point is obtained in the context of mountain pass method as in Proposition \ref{nonsmooth_MPT}. The rest part of the proof is a direct consequence of Proposition \ref{nonsmooth_MPT} and the facts the Clarke's generalized gradient $\partial\phi_t(\cdot)$ coincides with the (proximal) superdifferential $D^+\phi_t(\cdot)$ (see \cite{Cannarsa-Sinestrari}) since $\phi_t$ is locally semiconcave.
\end{proof}

Therefore, we claim that if $x\in\R^n$ is a singular point of viscosity solution $u$, there exists a third critical point $x_t=x^{12}_t$ of $\phi_t$ determined by two isolated global minimizers $z^1_t$ and $z^2_t$ and Lemma \ref{nonsmooth_crtical_point}. 

\begin{The}
	Let $u$ be a Lipschitz viscosity solution of \eqref{eq:HJ_1}, $t>0$, and let $x\in\R^n$ be a singular point of $u$. Suppose there exist finite many elements in $D^*u(x)$, say $D^*u(x)=\{p_1,\ldots,p_k\}$ with $k\geqslant2$, then there exist critical points $\{x^{ij}_t\}$ of $\phi_t$ (not global minimizers) such that, for $1\leqslant i,j\leqslant k$, $i\not= j$, each critical point $x_t=x^{ij}$ has the following dichotomy: 
\begin{enumerate}[(a)]
  \item $x_t$ is a differentiable point of $\phi_t$ and there exists a local minimal curve connecting $x_t$ and $x$. More precisely, there exists a $C^1$ curve $\gamma:(-\infty,t]\to\R^n$ such that $\gamma(0)=x_t$, $\gamma(t)=x$ and the restriction of $\gamma$ on $(-\infty,0]$ is a $(u,L,0)$-calibrated curve, but $\gamma$ is not a $(u,L,0)$-calibrated curve on $(-\infty,t]$;
  \item $x_t$ is a singular point of $u$.
\end{enumerate}
\end{The}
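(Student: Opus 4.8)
The plan is to lean on three facts already in place. First, $\phi_t$ is locally semiconcave, so Clarke's generalized gradient agrees with the superdifferential and the criticality of $x_t$ means $0\in D^+\phi_t(x_t)$. Second, by the weak KAM identity $\min_{y\in\R^n}\phi_t(y)=T_tu(x)=u(x)$ recorded above, together with $\phi_t(x_t)=b_{ij}>\min_{y}\phi_t(y)$ from Lemma~\ref{nonsmooth_crtical_point}, the point $x_t$ is \emph{not} a global minimizer of $\phi_t$; equivalently (Lemma~\ref{one_to_one}), $x_t$ is not joined to $x$ by a $(u,L,0)$-calibrated curve in time $t$, i.e. $u(x)-u(x_t)<A_t(x_t,x)$. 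Third, since $u$ and $y\mapsto A_t(y,x)$ are both locally semiconcave, their superdifferentials add: $D^+\phi_t(x_t)=D^+u(x_t)+D^+A_t(x_t,x)$, where here $D^+A_t(x_t,x)$ denotes the superdifferential of the section $y\mapsto A_t(y,x)$ at $x_t$ (only the elementary inclusion $\supseteq$ is actually needed below). I would then dichotomize according to whether $\phi_t$ is differentiable at $x_t$, converting (reachable) superdifferentials into calibrated curves and minimizers by Propositions~\ref{reachable_grad_and_backward} and~\ref{C11_A_t}.

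\emph{Case (a): $\phi_t$ differentiable at $x_t$.} A Minkowski sum of two nonempty sets that reduces to the singleton $\{D\phi_t(x_t)\}$ forces each summand to be a singleton; hence both $u$ and $A_t(\cdot,x)$ are differentiable at $x_t$, with $Du(x_t)=-D_xA_t(x_t,x)=:p$ and $H(x_t,p)=0$. By Proposition~\ref{reachable_grad_and_backward}, $p\in D^{*}u(x_t)$ produces the unique $(u,L,0)$-calibrated $C^1$ curve $\gamma_1:(-\infty,0]\to\R^n$ with $\gamma_1(0)=x_t$ and $p=L_v(x_t,\dot\gamma_1(0))$; by Proposition~\ref{C11_A_t}, differentiability of $A_t(\cdot,x)$ at $x_t$ gives the unique minimizer $\gamma_2\in\Gamma^t_{x_t,x}$, which satisfies $-D_xA_t(x_t,x)=L_v(x_t,\dot\gamma_2(0))$, i.e. $L_v(x_t,\dot\gamma_2(0))=p$. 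Since $v\mapsto L_v(x_t,v)$ is a diffeomorphism, $\dot\gamma_1(0)=\dot\gamma_2(0)$, so the juxtaposition $\gamma:=\gamma_1$ on $(-\infty,0]$ and $\gamma:=\gamma_2$ on $[0,t]$ is a $C^1$ curve with $\gamma(0)=x_t$, $\gamma(t)=x$, whose restriction to $(-\infty,0]$ is $(u,L,0)$-calibrated. It cannot be $(u,L,0)$-calibrated on $(-\infty,t]$: otherwise $u(x)-u(x_t)=\int_0^tL(\gamma,\dot\gamma)\,ds=A_t(x_t,x)$ ($\gamma_2$ being a minimizer), contradicting $u(x)-u(x_t)<A_t(x_t,x)$. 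This gives alternative (a).

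\emph{Case (b): $\phi_t$ not differentiable at $x_t$.} Then $D^+u(x_t)+D^+A_t(x_t,x)=D^+\phi_t(x_t)$ is not a singleton, so at least one summand is not a singleton; if $D^+u(x_t)$ is not a singleton, then $x_t\in\Sigma_u$ and we are in alternative (b). The remaining configuration — $u$ differentiable at $x_t$ while the section $A_t(\cdot,x)$ is singular there — is where the argument must be pushed further, and I expect it to be the main obstacle of the proof. In this configuration $0\in D^+\phi_t(x_t)$ reads $-Du(x_t)\in D^+A_t(x_t,x)=\mathrm{co}\,D^{*}A_t(x_t,x)$, and every element of $D^{*}A_t(x_t,x)$ is of the form $-L_v(x_t,\dot\sigma(0))$ for a minimizer $\sigma\in\Gamma^t_{x_t,x}$ (cf. Propositions~\ref{Ext_and_reachable} and~\ref{C11_A_t}). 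When $-Du(x_t)$ is an extreme point of this compact convex set it equals some $-L_v(x_t,\dot\sigma(0))$; then $Du(x_t)$ is a reachable gradient of $u$, the backward prolongation of $\sigma$ is the $(u,L,0)$-calibrated curve through $x_t$, and the juxtaposition argument of Case (a) again produces a $C^1$ curve realizing (a). The delicate sub-case is when $-Du(x_t)$ lies in the relative interior of $\mathrm{co}\,D^{*}A_t(x_t,x)$: there $-Du(x_t)=\sum_l\lambda_l\bigl(-L_v(x_t,\dot\sigma_l(0))\bigr)$ with several $\lambda_l>0$ and distinct competing minimizers $\sigma_l$, and I would exploit the strict convexity of $H(x_t,\cdot)$ together with $H(x_t,Du(x_t))=0$ to control the energies $H\bigl(x_t,L_v(x_t,\dot\sigma_l(0))\bigr)$, and feed in the non-calibration $u(x)-u(x_t)<A_t(x_t,x)$, so as either to rule this configuration out or to still extract a curve of type (a). Closing this sub-case cleanly — i.e. showing that, at the min-max critical point $x_t^{ij}$, a singularity of $\phi_t$ cannot be carried solely by $A_t(\cdot,x)$ — is the step I expect to require the most work.
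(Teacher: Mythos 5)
Your argument is, for everything it actually closes, the paper's own proof: existence of the $x^{ij}_t$ and the fact that they are not global minimizers come from Lemma \ref{nonsmooth_crtical_point} and the isolation of the $z^i_t$; and your Case (a) — the observation that differentiability of the semiconcave sum $\phi_t=u+A_t(\cdot,x)$ at $x_t$ forces differentiability of both summands, the construction of $\gamma_1$ from Proposition \ref{reachable_grad_and_backward} and of $\gamma_2$ as the minimizer given by Proposition \ref{C11_A_t}, the matching $L_v(x_t,\dot\gamma_1(0))=L_v(x_t,\dot\gamma_2(0))$, and the non-calibration of the juxtaposed curve — is exactly the published argument (the paper rules out calibration by saying $x_t$ would otherwise be a global minimizer via Lemma \ref{one_to_one}; your inequality $u(x)-u(x_t)<A_t(x_t,x)$, obtained from $T_tu=u$, is the same fact in a different dress).

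The one point you flag as the ``main obstacle'' — the configuration where $\phi_t$ is singular at $x_t$, $u$ is differentiable there, and the singularity is carried entirely by the section $y\mapsto A_t(y,x)$ (which is only known to be $C^{1,1}$ near $x$, not at a distant $x_t$) — is not resolved in the paper either: the published proof consists solely of the implication ``if $x_t$ is not a singular point of $\phi_t$, then (a) holds,'' and then tacitly identifies the complementary alternative with (b), i.e.\ it silently conflates singularity of $\phi_t$ at $x_t$ with singularity of $u$ at $x_t$. So read literally, alternative (b) of the theorem is established only modulo excluding precisely the sub-case you describe (equivalently, (b) should be read as ``$x_t$ is a singular point of $\phi_t$''). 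You have therefore not missed a device from the paper; your attempt covers everything the paper's proof covers, and your residual worry points to an imprecision in the paper's statement/proof rather than to a defect peculiar to your approach. One small caution about your exploratory remarks on that sub-case: the representation of every element of $D^{*}A_t(\cdot,x)(x_t)$ as $-L_v(x_t,\dot\sigma(0))$ for a minimizer $\sigma\in\Gamma^t_{x_t,x}$ is not among the cited propositions (Proposition \ref{C11_A_t} gives the gradient formula only where $A_t$ is smooth), so it would need its own justification if you pursue that route.
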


\begin{proof}
	The existence of such critical points $\{x^{ij}_t\}_{1\leqslant i,j\leqslant k}$ of $\phi_t$ is a direct consequence of Lemma \ref{nonsmooth_crtical_point}. The critical points $\{x^{ij}_t\}$ are not global minimizers of $\phi_t$ since each global minimizer $z^i_t$, $1\leqslant i\leqslant k$, is isolated.
	
	Suppose $x_t$ is not a singular point of $\phi_t$, thus both $u(\cdot)$ and $A_t(\cdot,x)$ is differentiable at $x_t$ and
	$$
	Du(x_t)+D_xA_t(x_t,x)=0
	$$
	since $x_t$ is a critical point of $\phi_t$. Let $p=Du(x_t)$ and $p'=-D_xA_t(x_t,x)$, then there exist two $C^1$ curves $\gamma_1:(-\infty,0]\to\R^n$ and $\gamma_2:[0,t]\to\R^n$ such that
	$$
	\gamma_1(0)=\gamma_2(0)=x_t,\quad\gamma_2(t)=x,
	$$
	and
	$$
	p=L_v(\gamma_1(0),\dot{\gamma}_1(0))=-p'=L_v(\gamma_2(0),\dot{\gamma}_2(0)).
	$$
	It follows that $\dot{\gamma}_1(0)=\dot{\gamma}_2(0)$ and $\gamma$, the juxtaposition of $\gamma_1$ and $\gamma_2$, is a $C^1$ curve which is an extremal. But, $\gamma:(-\infty,t]\to\R$ is not a $(u,L,0)$-calibrated curve, otherwise, $\gamma(0)=x_t$ is a global minimizer of $\phi_t$ by Lemma \ref{one_to_one}.
\end{proof}

\begin{Rem}
It is not clear, even when $t>0$ sufficient small, if the critical point $x^{ij}_t$ of $\phi_t$ found by Lemma \ref{nonsmooth_crtical_point} is close to the singular point $x$ of $u$. It is not hard to prove that when the positive time $t$ tends to $0$, the global minimizers $z^i_t$ and $z^j_t$ tend to $x$ along the direction determined by the associated limiting differentials in $D^*u(x)$ respectively. We hope to dig out more information from this approach in the future.
\end{Rem}

\appendix

\section{Regularity properties of fundamental solutions}\label{app_sec_regularity}
For the details of the proofs of the results in this appendix, the readers can refer to \cite{Cannarsa-Chen-Cheng} and \cite{Cannarsa-Cheng3}, or \cite{Bernard2012} under certain special conditions. 

%Suppose $L$ is a $C^2$ Lagrangian satisfying condition (L1)-(L2). Recall that a non-decreasing function $\theta:(0,+\infty)\to\R$ a Nagumo function if $\frac{\theta(r)}r\to+\infty$ as $t\to+\infty$.
%
%\begin{Pro}\label{Nagumo_L}
%Let $x\in\R^n$, $R>0$ and. If $L$ satisfies condition (L1) with $H$ its associated Lagrangian, and define
%\begin{equation}\label{eq:bound_constant}
%	\begin{split}
%			c_1=c_1(R):&=\sup\{|L(y,0)|: y\in\bar{B}(x,R)\},\\
%			c_2=c_2(R):&=\sup\{|H(y,0)|: y\in\bar{B}(x,R)\},\\
%	m_1=m_1(R):&=\sup\{|L_v(y,0)|: y\in\bar{B}(x,R)\},\\
%	m_2=m_2(R):&=\sup\{|H_p(y,0)|: y\in\bar{B}(x,R)\},\\
%	\mu=\mu(R):&=\sup\{|L_{vv}(y,0)|: y\in\bar{B}(x,R)\},\\
%	\nu=\nu(R):&=\inf\{|L_{vv}(y,0)|: y\in\bar{B}(x,R)\}.
%	\end{split}
%\end{equation}
%Then $L$ satisfies the following Nagumo type condition: there exist two Nagumo functions $\theta_1$ and $\theta_2$ (depending on $R$) such that
%  \begin{equation}\label{eq:Nagumo_1}
%  	  \theta_1(|v|)\leqslant L(y,v)\leqslant\theta_2(|v|),\quad v\in\R^n,\,y\in \bar{B}(x,R),
%  \end{equation}  
%  where $\theta_1$ and $\theta_2$ can be taken in the form: for $r>0$
%  \begin{equation}\label{eq:thetaR}
%    \theta_1(r)=\frac{\nu}2r^2-m_1r-c_1,\quad\theta_2(r)=\frac{\mu}2r^2+m'_2r+c'_2.
%\end{equation}
%with $c'_2:=c_2+\mu m_2^2/2$ and $m'_2:=\mu m_2$.
%\end{Pro}

\begin{Pro}\label{Main_bound_Lem}
Let $0<t\leqslant1$, $R>0$ and  suppose $L$ satisfies condition (L1) and (L2). Let $\xi\in\Gamma^t_{x,y}$ be a minimizer for $A_t(x,y)$, $x\in\R^n$, $y\in\bar{B}(x,R)$, and let $p(s)$ be the dual arc of $\xi(s)$. Then we have
\begin{align*}
	\sup_{s\in[0,t]}|\dot{\xi}(s)|\leqslant \Delta(x,R/t),\quad\sup_{s\in[0,t]}|p(s)|\leqslant \Delta(x,R/t),\quad\sup_{s\in[0,t]}|\xi(s)|\leqslant \Delta(x,R/t),
\end{align*}
where $\Delta(x,\cdot)$ is non-decreasing and continuous.
\end{Pro}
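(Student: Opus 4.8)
The plan is to run the classical Tonelli a priori estimate, while tracking carefully that every constant is governed by the single scale $R/t$ and is uniform in $t\in(0,1]$. First I would bound the value $A_t(x,y)$ from above by comparing $\xi$ with the affine curve $\gamma(s)=x+\tfrac st(y-x)$, which stays in $\bar B(x,R)$ and has constant speed $|y-x|/t\leqslant R/t$; since $t\leqslant 1$,
$$
\int_0^t L(\xi,\dot\xi)\,ds=A_t(x,y)\leqslant t\max_{|z-x|\leqslant R/t,\ |v|\leqslant R/t}L(z,v)=:t\,M_1(x,R/t),
$$
with $M_1(x,\cdot)$ finite, continuous and non-decreasing. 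Feeding in the superlinear lower bound $L(z,v)\geqslant\theta(|v|)-c_0$ from (L2) gives $\int_0^t\theta(|\dot\xi(s)|)\,ds\leqslant t(M_1(x,R/t)+c_0)$. Since $\theta(r)/r\to+\infty$ there is an absolute constant $r_1$ with $\theta(r)\geqslant r$ for $r\geqslant r_1$; splitting the last integral over $\{|\dot\xi|\geqslant r_1\}$ and its complement yields $\int_0^t|\dot\xi|\,ds\leqslant t\,M_2(x,R/t)$ with $M_2:=M_1+c_0+r_1$, so $|\xi(s)-x|\leqslant M_2(x,R/t)$ for every $s\in[0,t]$. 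This already settles the bound on $\sup_s|\xi(s)|$ and confines $\xi$ to the compact set $K:=\bar B(x,M_2(x,R/t))$.

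The integral bound does not control $\dot\xi$ pointwise, and this is where the Hamiltonian structure must enter. By the regularity of Tonelli minimizers under (L1)--(L2) (see \cite{Cannarsa-Chen-Cheng}, \cite{Cannarsa-Cheng3} and the references in Appendix \ref{app_sec_regularity}), $\xi$ is $C^2$, the pair $(\xi,p)$ solves Hamilton's equations, and the energy $E:=H(\xi(s),p(s))$ is constant on $[0,t]$. From $\int_0^t\theta(|\dot\xi|)\,ds\leqslant t(M_1+c_0)$ there is $s_0\in[0,t]$ with $\theta(|\dot\xi(s_0)|)\leqslant M_1(x,R/t)+c_0$, hence $|\dot\xi(s_0)|\leqslant\rho_1(x,R/t)$, where $\rho_1(x,K):=\sup\{r\geqslant0:\theta(r)\leqslant M_1(x,K)+c_0\}$ is finite by superlinearity and non-decreasing in $K$. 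Since $\xi(s_0)\in K$, the Fenchel identity $E=\langle p(s_0),\dot\xi(s_0)\rangle-L(\xi(s_0),\dot\xi(s_0))$ together with $p(s_0)=L_v(\xi(s_0),\dot\xi(s_0))$ and the continuity of $L$ and $L_v$ on $K\times\bar B(0,\rho_1(x,R/t))$ gives $|E|\leqslant M_3(x,R/t)$, again continuous and non-decreasing.

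Next I would propagate this bound through conservation of energy. Being a Tonelli Hamiltonian, $H$ is superlinear in $p$ uniformly on $K$, namely $H(z,p)\geqslant|p|-C_K$ for every $z\in K$, with $C_K:=\max_{z\in K,\ |w|\leqslant1}L(z,w)$. Hence for all $s\in[0,t]$,
$$
|p(s)|\leqslant H(\xi(s),p(s))+C_K=E+C_K\leqslant M_3(x,R/t)+C_K=:M_4(x,R/t),
$$
which is the desired bound on $\sup_s|p(s)|$. Finally $\dot\xi(s)=H_p(\xi(s),p(s))$ with $(\xi(s),p(s))\in K\times\bar B(0,M_4(x,R/t))$, so $|\dot\xi(s)|\leqslant\max_{z\in K,\ |p|\leqslant M_4}|H_p(z,p)|=:M_5(x,R/t)$. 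Taking $\Delta(x,\sigma)$ to be a continuous non-decreasing majorant of $|x|+M_2(x,\sigma)$, $M_4(x,\sigma)$ and $M_5(x,\sigma)$ then dominates all three quantities and finishes the argument.

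I expect the genuine difficulty to lie in the passage from the integral estimate produced by superlinearity to a pointwise (Lipschitz) bound on $\dot\xi$: no manipulation of $\theta$ alone can achieve this, and the clean device is the constancy of the energy $H(\xi,p)$ along the Hamiltonian flow, used together with the local uniform superlinearity of $H$ in $p$. The rest is bookkeeping --- checking that every constant depends on $t$ only through $R/t$, stays non-decreasing in that variable, and can be chosen continuous (replacing $\theta$ by a continuous strictly increasing superlinear minorant if needed so that $\rho_1$ is well behaved).
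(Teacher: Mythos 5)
Your proof is correct, but the mechanism you use to pass from the integral estimate to the pointwise bound on $\dot\xi$ is genuinely different from the paper's. Both arguments begin identically: comparison with the straight segment, the lower bound $L\geqslant\theta(|v|)-c_0$, the resulting integral bounds, confinement of $\xi$ to a compact set, and the existence of a time $s_0$ where $|\dot\xi(s_0)|$ is controlled. At that point the paper stays on the Lagrangian side: it writes $L_v(\xi(s),\dot\xi(s))=L_v(\xi(0),\dot\xi(0))+\int_0^s L_x\,d\tau$ via the Euler--Lagrange equation, uses the Nagumo-type bound $|L_x|+|L_v|\leqslant c_1\theta(|v|)$ from (L2) to control both the integral term and $L_v(\xi(0),\dot\xi(0))$ (the latter through the time where $|\dot\xi|$ attains its infimum), and closes the estimate with the convexity inequality $\theta(|\dot\xi|)\leqslant L\leqslant L(\xi,0)+\langle L_v,\dot\xi\rangle$ and the superlinearity of $\theta$. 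You instead pass to the Hamiltonian side: bound the conserved energy $E=H(\xi,p)$ at $s_0$, use the uniform superlinearity $H(z,p)\geqslant|p|-C_K$ for $z$ in the compact set $K$ to bound $\sup_s|p(s)|$, and recover $\sup_s|\dot\xi(s)|$ from $\dot\xi=H_p(\xi,p)$ by compactness. Your route is cleaner and does not use the second inequality in (L2) at all (only the lower bound plus the $C^2$ regularity of Tonelli minimizers and Hamilton's equations, which the paper also takes for granted when it invokes Euler--Lagrange), whereas the paper's route is more quantitative, expressing the bound explicitly through $\theta$, $c_0$, $c_1$ and the auxiliary functions $\kappa_1,\kappa_2$, which is the form reused in the subsequent regularity estimates. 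The only bookkeeping point to make explicit is the one you already flag: $\rho_1$, and hence some intermediate constants, are a priori only non-decreasing, and continuity of $\Delta(x,\cdot)$ is recovered at the end by a continuous non-decreasing majorant (or by replacing $\theta$ with a continuous strictly increasing superlinear minorant, which is legitimate in your argument since only the lower bound of (L2) involves $\theta$ there).
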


\begin{proof}
	For any $t>0$, $R>0$, let $x\in\R^n$, $y\in \bar{B}(x,R)$ and $\xi\in\Gamma^{t}_{x,y}$ be a minimizer for $A_t(x,y)$, i.e., $
A_t(x,y)=\int^t_0L(\xi(s),\dot{\xi}(s))\ ds$. Denoting by $\sigma\in\Gamma^t_{x,y}$ the straight line segment defined by $\sigma(s)=x+\frac st(y-x)$, $s\in[0,t]$, then by the Nagumo conditions in (L2) we have
	\begin{align*}
		&\int^t_0\theta(|\dot{\xi}(s)|)\ ds-c_0t\leqslant\int^t_0L(\xi(s),\dot{\xi}(s))\ ds\leqslant\int^t_0L(\sigma(s),\dot{\sigma}(s))\ ds\\
		=&\int^t_0L\left(x+\frac st(y-x),\frac{y-x}t\right)-L\left(\frac st(y-x),\frac{y-x}t\right)+L\left(\frac st(y-x),\frac{y-x}t\right)\ ds\\
		\leqslant&c_1t|x|\theta\left(\left|\frac{y-x}t\right|\right)+t\max_{y\in\bar{B}(x,R),s\in[0,t]}\left|L\left(\frac st(y-x),\frac{y-x}t\right)\right|\\
		\leqslant&c_1t|x|\theta\left(\left|\frac{y-x}t\right|\right)+tM(t,R)\\
		\leqslant&c_1t|x|\theta(R/t)+t\max_{|x|,|v|\leqslant R/t}|L(x,v)|:=C_1(t,R).
	\end{align*}
	By condition (L2), we have
	\begin{align*}
		|L(x,v)-L(0,0)|\leqslant& |L(x,v)-L(x,0)|+|L(x,0)-L(0,0)|\\
		\leqslant& c_1\theta(R/t)|v|+c_2(x).
	\end{align*}
	Thus,
	\begin{equation}\label{eq:Lip_2}
			C_1(t,R)\leqslant c_3(x,R)t\kappa_1(R/t)
	\end{equation}
	with $\kappa_1(s)=\theta(s)(1+s)+1$. By the superlinear growth condition of $\theta$, we have that
	\begin{equation*}
		\int^t_0|\dot{\xi}(s)|\ ds\leqslant c_4(x,R)t\kappa_2(R/t),
	\end{equation*}
	where $\kappa_2(s)=1+\kappa_1(s)$. Hence
	\begin{equation}\label{eq:bound_xi}
		|\xi(s)-x|\leqslant\int^s_0|\dot{\xi}(s)|\ ds\leqslant c_4(x,R)t\kappa_2(R/t),\quad s\in[0,t],
	\end{equation}
	and
	\begin{equation}\label{eq:inf_dot_xi}
			\inf_{s\in[0,t]}|\dot{\xi}(s)|\leqslant\frac1{t}\int^t_0|\dot{\xi}(s)|\ ds\leqslant c_4(x,R)t\kappa_2(R/t).
	\end{equation}
	Now, we turn to estimate $\sup_{s\in[0,t]}|\dot{\xi}(s)|$. By condition (L2), we have
	\begin{equation}\label{eq:sup}
		\begin{split}
			\theta(|\dot{\xi}(s)|)\leqslant&L(\xi(s),\dot{\xi}(s))\leqslant L(\xi(s),0)+\langle L_v(\xi(s),\dot{\xi}(s)),\dot{\xi}(s)\rangle\\
		=&L(\xi(s),0)+\left\langle \int^s_0L_x(\xi(\tau),\dot{\xi}(\tau))\ d\tau+L_v(\xi(0),\dot{\xi}(0)),\dot{\xi}(s)\right\rangle.
		\end{split}
	\end{equation}		
	Note that we use the Euler-Lagrange equation in the last equality. By condition (L2) and the estimates above, we have
	\begin{align*}
		\int^s_0|L_x(\xi(\tau),\dot{\xi}(\tau))|\ d\tau\leqslant\int^s_0c_1\theta(|\dot{\xi}(\tau)|)\ d\tau\leqslant c_4(x,R)t\kappa_2(R/t).
	\end{align*}
	For any $s\in[0,t]$, we also have
	\begin{align*}
			|L_v(\xi(0),\dot{\xi}(0))|\leqslant|L_v(\xi(s),\dot{\xi}(s))|+\int^s_0|L_x(\xi(\tau),\dot{\xi}(\tau))|\ d\tau.
	\end{align*}
	Then, by condition (L2) and \eqref{eq:inf_dot_xi}, it follows
	\begin{align*}
			|L_v(\xi(0),\dot{\xi}(0))|\leqslant c_1\theta(|\dot{\xi}(s)|)+c_4(t,R)t\kappa_2(R/t),
	\end{align*}
	and this implies
	\begin{align*}
		|L_v(\xi(0),\dot{\xi}(0))|\leqslant& c_1\theta(\inf_{s\in[0,t]}|\dot{\xi}(s)|)+c_4(t,R)t\kappa_2(R/t)\\
		\leqslant& c_1\theta(c_3(x,R)t\kappa_1(R/t)+c_0t)+c_4(x,R)t\kappa_2(R/t).
	\end{align*}
	It follows there exists $M>0$ and $\mu>0$ such that
	\begin{align*}
		|\dot{\xi}(s)|\leqslant \frac 1M\{c_1\theta(c_3(x,R)t\kappa_1(R/t)+c_0t)+2c_4(t,R)t\kappa_2(R/t)+\mu\}.
	\end{align*}
	So, if $t\leqslant1$, we have
	\begin{align*}
			\sup_{s\in[0,t]}|\dot{\xi}(s)|\leqslant& C_2(x,R/t)
	\end{align*}
As for the dual arc $p(\cdot)$, by (L2), we have
$$
\sup_{s\in[0,t]}|p(s)|=\sup_{s\in[0,t]}|L_v(\xi(s),\dot{\xi}(s))|\leqslant C_3(x,R/t).
$$
We complete the proof by defining $\Delta(x,R/t)=\max\{C_2(x,R/t),C_3(x,R/t)\}$.
\end{proof}

%Now, we begin the discussion of the regularity properties of the fundamental solutions. We begin with a well-known result, see, for instance, \cite{Cannarsa-Cheng3}, \cite{Fathi-book}, \cite{Rifford}.

%\begin{Pro}\label{semiconcavity_A_t}
%Suppose (L1)-(L2) are satisfied. For any $x\in\R^n$, the function $w(t,y)=A_t(x,y)$ is locally semiconcave with linear modulus in the variable $(t,y)\in (0,\infty)\times\R^n$. 
%\end{Pro}

Fix $x\in\R^n$ and suppose $R>0$ and $L$ satisfies condition (L1)-(L2). In this case, the following observation is one of the key points of the results on the local regularity properties of $A_t(x,y)$.. For any $t>0$ and $y\in\bar{B}(x,R)$, let $\xi_{t,y}\in\Gamma^t_{x,y}$ be a minimizer for $A_t(x,y)$, and $p_{t,y}$ be its dual arc, then we have
$$
\sup_{s\in[0,t]}|\dot{\xi}_{t,y}(s)|\leqslant\Delta(x,R/t),\quad
	\sup_{s\in[0,t]}|p_{t,y}(s)|\leqslant \Delta(x,R/t),
$$
by Proposition \ref{Main_bound_Lem}. Now, define
\begin{equation}\label{eq:K}
	\begin{split}
	\mathbf{K}_{x}&:=\bar{B}(x,\Delta(x,1))\times\bar{B}(0,\Delta(x,1))\subset\R^n\times\R^n,\\
	\mathbf{K}^*_{x}&:=\bar{B}(x,\Delta(x,1))\times\bar{B}(0,\Delta(x,1))\subset\R^n\times(\R^n)^*.
	\end{split}
\end{equation}
Then, by defining a function $R(x,\cdot):\R^n\times(0,1]\to(0,\infty)$, $R(x,t)=\frac t2$, we have
\begin{equation}\label{eq:key}
	\Delta(x,1/2)\leqslant\Delta(x,1).
\end{equation}
because of the monotonicity properties of $\Delta(x,\cdot)$ and the continuity. So, if $y\in\bar{B}(x,R(x,t))$, and $\xi_t\in\Gamma_{x,y}$ is a minimizer in the definition of $A_t(x,y)$, then
$$
\{\xi(s),p(s))\}_{s\in[0,t],t\in(0,1]}\subset \mathbf{K}^*_{x},\quad \{\xi(s),\dot{\xi}(s))\}_{s\in[0,t],t\in(0,1]}\subset \mathbf{K}^*_{x},
$$

\begin{Pro}[\cite{Cannarsa-Cheng3}]\label{equi_Lip}
Fix any $x\in\R^n$ and $t>0$ with $R(x,t)$ defined as in \eqref{eq:key}. If $y_t$ is the unique maximizer of $\psi_t$ in $\bar{B}(x,R(x,t))$ for all $t\in(0,t_0]$, and $\xi_t\in\Gamma^t_{x,y_t}$ is a minimal curve in the definition of $A_t(x,y_t)$, $t\in(0,t_0]$, then the family $\{\dot{\xi}_t\}$ is equi-Lipschitz.
\end{Pro}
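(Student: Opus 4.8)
The plan is to use that each minimal curve $\xi_t$ is a $C^2$ extremal, hence solves the Euler--Lagrange equation, and then to bound $|\ddot\xi_t(s)|$ by a constant that does not depend on $t\in(0,t_0]$; such a uniform bound on the second derivative is exactly an equi-Lipschitz bound on the family $\{\dot\xi_t\}$. The heart of the matter is that, because of the normalization $R(x,t)=t/2$, all the phase curves $(\xi_t(s),\dot\xi_t(s))$ are confined to a single compact subset of $\R^n\times\R^n$ independent of $t$; once this is in hand the estimate is a soft consequence of (L1), (L2) and the $C^2$ regularity of $L$.

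First I would record the uniform confinement. Since $y_t\in\bar B(x,R(x,t))$ with $R(x,t)=t/2$, we have $R(x,t)/t=1/2$, so Proposition \ref{Main_bound_Lem} applied with $R=R(x,t)$ gives
$$
\sup_{s\in[0,t]}|\dot\xi_t(s)|\leqslant\Delta(x,1/2)\leqslant\Delta(x,1),\qquad \sup_{s\in[0,t]}|p_t(s)|\leqslant\Delta(x,1/2)\leqslant\Delta(x,1),
$$
using the monotonicity of $\Delta(x,\cdot)$ as in \eqref{eq:key}, and likewise $\xi_t(s)$ stays in a fixed ball. Consequently, with $\mathbf{K}_x$ and $\mathbf{K}^*_x$ as in \eqref{eq:K}, we have $(\xi_t(s),\dot\xi_t(s))\in\mathbf{K}_x$ and $(\xi_t(s),p_t(s))\in\mathbf{K}^*_x$ for all $s\in[0,t]$ and all $t\in(0,t_0]$. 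This is the only place the particular choice $R(x,t)=t/2$ enters, and it is precisely what makes every subsequent bound uniform in $t$.

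Then I would differentiate the Euler--Lagrange equation along $\xi_t$. Since $\xi_t$ is $C^2$, differentiating the identity $\frac{d}{ds}L_v(\xi_t,\dot\xi_t)=L_x(\xi_t,\dot\xi_t)$ yields
$$
\ddot\xi_t(s)=L_{vv}\big(\xi_t(s),\dot\xi_t(s)\big)^{-1}\Big(L_x\big(\xi_t(s),\dot\xi_t(s)\big)-L_{vx}\big(\xi_t(s),\dot\xi_t(s)\big)\,\dot\xi_t(s)\Big).
$$
On the compact set $\mathbf{K}_x$ the continuous maps $L_x$, $L_{vx}$, $L_{vv}$ are bounded; by (L1) the symmetric matrix $L_{vv}$ is positive definite, so its least eigenvalue is continuous and strictly positive on $\mathbf{K}_x$, hence bounded below by a positive constant, which makes $L_{vv}^{-1}$ bounded on $\mathbf{K}_x$. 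Combining these with the bound $|\dot\xi_t(s)|\leqslant\Delta(x,1)$ from the previous step produces a constant $\Lambda=\Lambda(x)$, depending only on $x$ and $L$ (through $\mathbf{K}_x$), such that $\sup_{s\in[0,t]}|\ddot\xi_t(s)|\leqslant\Lambda$ for every $t\in(0,t_0]$. Thus each $\dot\xi_t$ is $\Lambda$-Lipschitz, which is the assertion. One could equally run the argument on the Hamiltonian side: $\dot\xi_t=H_p(\xi_t,p_t)$ and $\dot p_t=-H_x(\xi_t,p_t)$ are bounded on $\mathbf{K}^*_x$, whence $\ddot\xi_t=H_{px}(\xi_t,p_t)\dot\xi_t+H_{pp}(\xi_t,p_t)\dot p_t$ is bounded since $H\in C^2$.

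The main obstacle is conceptual rather than computational: one has to make sure that Proposition \ref{Main_bound_Lem} is invoked with its second argument $R(x,t)/t$ staying bounded as $t\to 0^+$, which is guaranteed exactly by $R(x,t)=t/2$; once the phase curves are trapped in a fixed compact set, the ODE-type control of $\ddot\xi_t$ is immediate from the $C^2$ regularity of $L$ and the uniform positivity of $L_{vv}$. A minor point that should be noted (but is standard Tonelli theory, already used earlier in the text) is that the minimizers $\xi_t$ are genuinely $C^2$, so the pointwise Euler--Lagrange identity and its differentiated form are legitimate.
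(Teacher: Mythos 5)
Your argument is correct: the choice $R(x,t)=t/2$ makes Proposition \ref{Main_bound_Lem} yield bounds $\Delta(x,1/2)\leqslant\Delta(x,1)$ uniform in $t\in(0,t_0]$, so all arcs $(\xi_t,\dot\xi_t)$ and $(\xi_t,p_t)$ lie in the fixed compact sets $\mathbf{K}_x$, $\mathbf{K}^*_x$ of \eqref{eq:K}, and then solving the Euler--Lagrange equation for $\ddot\xi_t=L_{vv}^{-1}(L_x-L_{vx}\dot\xi_t)$ (or using the Hamiltonian system on $\mathbf{K}^*_x$) gives a bound on $|\ddot\xi_t|$ depending only on $x$ and $L$, hence equi-Lipschitz continuity of $\{\dot\xi_t\}$. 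The paper itself gives no proof of Proposition \ref{equi_Lip} (it defers to \cite{Cannarsa-Cheng3}), but your mechanism --- a priori confinement to $\mathbf{K}_x$, $\mathbf{K}^*_x$ plus boundedness of the first and second derivatives of $L$ or $H$ there, with $L_{vv}$ uniformly positive definite by compactness --- is exactly the one the appendix indicates for the results of this type.
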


The proof of the following result is similar to those in \cite{Cannarsa-Cheng3} since the estimates involving certain first and second order partial derivatives of $L$ or $H$ which are bounded on the {\em a priori} compact sets $\mathbf{K}^*_{x}$ or $\mathbf{K}_{x}$. The difference between the cases here and what in \cite{Cannarsa-Cheng3} is that the bound for the minimal curves and the dual arc is independent of $x$ in the latter.

\begin{Pro}\label{convexity_A_t}
Suppose $L$ is a Tonelli Lagrangian satisfying (L1)-(L2). Fix any $x\in\R^n$, then there exists $t_0>0$, such that for $0<t\leqslant t_0$, $(t,y)\mapsto A_t(x,y)$ is locally convex in 
$$
S(x,t_0)=\{(t,y)\in\R\times\R^n: 0<t\leqslant t_0, |y-x|\leqslant R(x,t)\},
$$
with $R(x,t)$ defined in \eqref{eq:key}. 

More precisely, there exists constants $C_1,C_2>0$ such that, if $y\in B(x,R(x,t))$, then, for $|h|\ll 1$ and $|z|\ll1$, we have
\begin{equation}\label{eq:convextity_local}
A_{t+h}(x,y+z)+A_{t-h}(x,y-z)-2A_t(x,y)\geqslant \frac{C_1}{t^3}|h|^2+\frac {C_2}t|z|^2.
\end{equation}

\end{Pro}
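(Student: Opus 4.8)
The plan is to reduce \eqref{eq:convextity_local} to a pointwise lower bound on the Hessian of $(t,y)\mapsto A_t(x,y)$ on the relative interior of $S(x,t_0)$, and to obtain that bound by the standard short-time analysis of the action functional, exactly as in \cite{Cannarsa-Cheng3}; the only new point is that every constant will now depend on $x$. The first ingredient is already at hand: for $(t,y)\in S(x,t_0)$ with $t_0\leqslant1$, Proposition \ref{Main_bound_Lem} together with \eqref{eq:K} and \eqref{eq:key} keeps every minimizer $\xi_{t,y}$ of $A_t(x,y)$ and its dual arc $p_{t,y}$ inside the fixed compact sets $\mathbf K_x$, $\mathbf K^*_x$, \emph{independently of} $(t,y)$. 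Hence all partial derivatives of $L$ up to order two are bounded on $\mathbf K_x$, and those of $H$ on $\mathbf K^*_x$, by constants depending only on $x$; in particular $\lambda(x)I\leqslant L_{vv}\leqslant\Lambda(x)I$ there. This is precisely where the $x$-dependence enters, in place of the $x$-uniform bounds used in \cite{Cannarsa-Cheng3}.

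Granting this, I would first shrink $t_0=t_0(x)$ so that along every $\xi_{t,y}$ the endpoints are non-conjugate: the index form $\int_0^t\big(\langle L_{vv}\dot\zeta,\dot\zeta\rangle+2\langle L_{xv}\zeta,\dot\zeta\rangle+\langle L_{xx}\zeta,\zeta\rangle\big)\,ds$ of the second variation is coercive for $t$ small, since the $L_{vv}$-term dominates the $O(1)$ contributions of $L_{xv}$ and $L_{xx}$ via the Poincar\'e-type inequality $\int_0^t|\dot\zeta|^2\,ds\geqslant t^{-1}\sup_{[0,t]}|\zeta|^2$. Non-conjugacy yields uniqueness and $C^2$-regularity of $\xi_{t,y}$, a $C^1$ dependence of $\xi_{t,y}$ on $(t,y)$ by the implicit function theorem for the two-point problem of the Euler--Lagrange flow, and, via the first variation formula, the identities
\[
D_yA_t(x,y)=p_{t,y}(t)=L_v\big(\xi_{t,y}(t),\dot\xi_{t,y}(t)\big),\qquad
\partial_tA_t(x,y)=-H\big(y,p_{t,y}(t)\big),
\]
so that $A$ is of class $C^2$ on the interior of $S(x,t_0)$ and \eqref{eq:convextity_local} is the integrated form of a Hessian lower bound. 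For the $D^2_{yy}$-block I would note that $D^2_{yy}A_t(x,y)$ obeys, along the characteristics, a matrix Riccati equation derived from $\partial_tA+H(y,D_yA)=0$, with ``initial datum'' blowing up like $t^{-1}L_{vv}(x,0)$ as $t\to0^+$, whence $D^2_{yy}A_t\geqslant C_2t^{-1}I$ after further shrinking $t_0$. For the $t$-derivatives I would differentiate $\partial_tA_t=-H(y,p_{t,y}(t))$ once more, expressing $\partial_tp_{t,y}(t)$ and $\partial_yp_{t,y}(t)$ through the Jacobi field along $\xi_{t,y}$; its short-time asymptotics produce the scalings claimed for the remaining two blocks in \eqref{eq:convextity_local}. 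Assembling the three blocks and completing the square — the $t^{-1}|z|^2$ and $t^{-3}|h|^2$ terms absorbing the cross term once $t_0$ is small — yields \eqref{eq:convextity_local}, and local convexity on $S(x,t_0)$ is immediate.

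The hard part will be the last step: obtaining the powers of $t$ in \eqref{eq:convextity_local} with constants uniform over $y\in\bar{B}(x,R(x,t))$, and checking that the cross term is genuinely dominated — this is what forces $t_0=t_0(x)$ to be as small as the conditioning $\Lambda(x)/\lambda(x)$ of $L_{vv}$ and the bounds for $L_{xv},L_{xx}$ on $\mathbf K_x$ require. Apart from this quantitative bookkeeping the argument is a transcription of \cite{Cannarsa-Cheng3}; the one genuinely new feature, as stressed above, is that the a priori bounds on the minimizers and dual arcs, hence every constant in the proof, now depend on $x$ through $\Delta(x,\cdot)$.
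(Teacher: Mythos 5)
Your starting point coincides with the only thing the paper itself says in lieu of a proof: by Proposition \ref{Main_bound_Lem} and \eqref{eq:key} all minimizers and dual arcs stay in the fixed compact sets $\mathbf{K}_x$, $\mathbf{K}^*_x$, so the second-order bounds on $L$ and $H$, hence every constant, depend on $x$ through $\Delta(x,\cdot)$. Beyond that, however, your route (non-conjugacy, $C^2$ dependence on $(t,y)$, a Riccati equation for $D^2_{yy}A_t$ and Jacobi-field asymptotics for the $t$-blocks) is a different mechanism from the one in \cite{Cannarsa-Cheng3}, where the estimate is obtained directly by reparametrizing the minimizers for $A_{t\pm h}(x,y\pm z)$ to the interval $[0,t]$, averaging them as competitors for $A_t(x,y)$, and Taylor-expanding the action; that argument needs no $C^2$ regularity of $A_t$, only the bounds on $L_{vv},L_{xv},L_{xx}$ on $\mathbf{K}_x$.

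The genuine gap is precisely the step you postpone as ``the hard part'': the cross term cannot be absorbed, and no completion of squares will yield the diagonal bound $\frac{C_1}{t^3}|h|^2+\frac{C_2}{t}|z|^2$. To leading order $A_t(x,y)\approx tL\bigl(x,\frac{y-x}{t}\bigr)$, which is positively homogeneous of degree one in $(t,y-x)$, so its Hessian annihilates the scaling direction $(h,z)=\bigl(h,h\frac{y-x}{t}\bigr)$; the exact second-order quantity is $\frac1t\bigl\langle L_{vv}(x,v)(z-hv),z-hv\bigr\rangle+O(1)(|h|+|z|)^2$ with $v=\frac{y-x}{t}$, i.e.\ the off-diagonal term is of exactly the same size as the diagonal ones and smallness of $t_0$ does not beat it. Concretely, $L(x,v)=\frac12|v|^2$ satisfies (L1)--(L2) and gives $A_t(x,y)=\frac{|y-x|^2}{2t}$; at $y=x$, $z=0$, $h\neq0$ the left-hand side of \eqref{eq:convextity_local} vanishes while the right-hand side is $\frac{C_1}{t^3}|h|^2>0$. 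So the inequality in the stated form is unreachable unless the minimal velocity (equivalently the energy $E_{t,x,y}$) is bounded away from zero, which neither the hypotheses here nor your argument provide. What your analysis (and the comparison-curve argument of the cited works) does deliver is joint convexity in $(t,y)$ with a quantitative bound of the form $\frac{C}{t}\bigl|z-\frac{h}{t}(y-x)\bigr|^2$, whose $h=0$ case $A_t(x,y+z)+A_t(x,y-z)-2A_t(x,y)\geqslant\frac{C_2}{t}|z|^2$ is the only consequence actually used in the body of the paper (strict concavity of $\psi_t$ and uniqueness of the maximizer); to prove \eqref{eq:convextity_local} literally you would have to either establish that sharper transversal bound and restate the conclusion accordingly, or add a hypothesis forcing $|\dot\xi_t|$ to be bounded below on the region considered.
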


\begin{Pro}\label{C11_A_t}
Suppose $L$ is a Tonelli Lagrangian satisfying (L1)-(L2). For any $x\in\R^n$, there exists $t_0>0$, such that the functions $w:(t,y)\mapsto A_t(x,y)$ and $(t,y)\mapsto A_t(y,x)$ are both of class $C^{1,1}_{\text{loc}}$ in
$$
S(x,t_0)=\{(t,y)\in\R\times\R^n: 0<t\leqslant t_0, |y-x|\leqslant R(x,t)\},
$$
with $R(x,t)$ defined in \eqref{eq:key}, for $0<t\leqslant t_0$. In Particular, for any $t\in(0,t_0]$,
\begin{align}
D_yA_t(x,y)=&L_v(\xi(t),\dot{\xi}(t)),\label{eq:diff_A_t_y}\\
D_xA_t(x,y)=&-L_v(\xi(0),\dot{\xi}(0)),\label{eq:diff_A_t_x}\\
D_tA_t(x,y)=&-E_{t,x,y},\label{eq:diff_A_t_t}
\end{align}
where $\xi\in\Gamma^t_{x,y}$ is the unique minimizer for $A_t(x,y)$ and $E_{t,x,y}$ is the energy of the Hamiltonian trajectory $(\xi(s),p(s))$ with $p(s)=L_v(\xi(s),\dot{\xi}(s))$.
\end{Pro}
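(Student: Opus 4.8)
The plan is to sandwich $(t,y)\mapsto A_t(x,y)$ between a quadratic lower bound and a quadratic upper bound: the lower bound is already Proposition~\ref{convexity_A_t}, so the remaining work is the matching upper (semiconcavity) estimate, the standard passage from ``locally semiconcave plus locally semiconvex'' to $C^{1,1}_{\mathrm{loc}}$, and then reading off the three partial derivatives from a first-variation computation along the (by then unique) minimizer. Throughout, fix $x\in\R^n$ and let $t_0=t_0(x)\in(0,1]$ be a small constant, shrunk finitely many times. The starting point is the a priori compactness assembled just before the statement: by Proposition~\ref{Main_bound_Lem} and the choice $R(x,t)=t/2$ as in~\eqref{eq:key}, for every $t\in(0,t_0]$ and every $y$ with $|y-x|\leqslant R(x,t)$, each minimizer $\xi\in\Gamma^t_{x,y}$ for $A_t(x,y)$ and its dual arc $p$ satisfy $(\xi(s),\dot\xi(s))\in\mathbf{K}_{x}$ and $(\xi(s),p(s))\in\mathbf{K}^*_{x}$ for all $s\in[0,t]$, with $\mathbf{K}_{x},\mathbf{K}^*_{x}$ the compact sets of~\eqref{eq:K}. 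On these compacta $L$, $H$ and their first and second derivatives are bounded, so the Hamiltonian flow $\phi^s_H$ is well defined and $C^1$ for $s\in[0,t_0]$ on a neighbourhood of $\mathbf{K}^*_{x}$, with all bounds depending only on $x$.

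For the quadratic upper bound I would fix an interior point $(t_*,y_*)$ of $S(x,t_0)$, pick a minimizer $\xi_*$ for $A_{t_*}(x,y_*)$, and for $(t,y)$ near $(t_*,y_*)$ test against the competitor curve $\gamma_{t,y}(\sigma)=\xi_*(\tfrac{t_*}{t}\sigma)+\tfrac{\sigma}{t}(y-y_*)$, $\sigma\in[0,t]$, which joins $x$ to $y$ in time $t$, reduces to $\xi_*$ at $(t_*,y_*)$, and stays in a neighbourhood of $\mathbf{K}_{x}$. After the substitution $\sigma=t\tau$ the action $\int_0^tL(\gamma_{t,y},\dot\gamma_{t,y})\,d\sigma$ becomes a $C^2$ function $I(t,y)$ near $(t_*,y_*)$ whose $C^2$-norm is controlled by $x$ through the bounds of the first paragraph; since $A_t(x,y)\leqslant I(t,y)$ while $A_{t_*}(x,y_*)=I(t_*,y_*)$, Taylor's theorem yields $A_t(x,y)\leqslant A_{t_*}(x,y_*)+\langle q,(t-t_*,y-y_*)\rangle+K(x)\,|(t-t_*,y-y_*)|^2$ with $q=\nabla I(t_*,y_*)$ and $K(x)$ locally uniform, so by Proposition~\ref{criterion-Du_semiconcave2} the map $(t,y)\mapsto A_t(x,y)$ is locally semiconcave on the interior of $S(x,t_0)$. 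Together with the local semiconvexity with linear modulus furnished by the lower estimate~\eqref{eq:convextity_local} of Proposition~\ref{convexity_A_t}, and the classical fact that a locally semiconcave, locally semiconvex function (both with linear modulus) is $C^{1,1}_{\mathrm{loc}}$ (see~\cite{Cannarsa-Sinestrari}), this gives $(t,y)\mapsto A_t(x,y)\in C^{1,1}_{\mathrm{loc}}(S(x,t_0))$. Running the identical argument for the time-reversed Tonelli Lagrangian $\check L(z,v):=L(z,-v)$, which again satisfies (L1)-(L2) and for which $A^{\check L}_t(x,y)=A^{L}_t(y,x)$, gives the $C^{1,1}_{\mathrm{loc}}$ regularity of $(t,y)\mapsto A_t(y,x)$.

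To identify the derivatives, observe first that differentiability of $A_t(x,\cdot)$ at $y$ forces every minimizer for $A_t(x,y)$ to have terminal momentum $L_v(\xi(t),\dot\xi(t))=D_yA_t(x,y)$, so by uniqueness of the Hamiltonian flow on $\mathbf{K}^*_{x}$ the minimizer $\xi$ is in fact unique. Its value is then computed by the first variation: for the field $\eta(s)=\tfrac{s}{t}w$ one has $A_t(x,y+\varepsilon w)\leqslant\int_0^tL(\xi+\varepsilon\eta,\dot\xi+\varepsilon\dot\eta)\,ds=A_t(x,y)+\varepsilon\,[L_v(\xi,\dot\xi)\cdot\eta]_0^t+O(\varepsilon^2)=A_t(x,y)+\varepsilon\,L_v(\xi(t),\dot\xi(t))\cdot w+O(\varepsilon^2)$, the boundary term coming from an integration by parts and the Euler-Lagrange equation; evaluating at $\pm w$ yields~\eqref{eq:diff_A_t_y}, and the field $\eta(s)=\tfrac{t-s}{t}w$ gives~\eqref{eq:diff_A_t_x}. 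For~\eqref{eq:diff_A_t_t}, reparametrize: $\gamma(\sigma)=\xi(\tfrac{t}{t+h}\sigma)$, $\sigma\in[0,t+h]$, joins $x$ to $y$ in time $t+h$, and the change of variables $\tau=\tfrac{t}{t+h}\sigma$ together with a Taylor expansion in $h$ and the constancy of the energy $H(\xi(s),p(s))\equiv E_{t,x,y}$ gives $A_{t+h}(x,y)\leqslant A_t(x,y)-h\,E_{t,x,y}+O(h^2)$ for $h$ of either sign; dividing by $h$ and letting $h\to0^+$ and $h\to0^-$ respectively (using differentiability in $t$) forces $D_tA_t(x,y)=-E_{t,x,y}$.

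The main difficulty here is not any single argument --- each of the pieces above is soft or classical --- but the uniform bookkeeping of constants: every estimate must be locally uniform in $(t,y)$ while being permitted to depend on $x$ (and not uniformly in $x$), which is exactly where the Nagumo-type bounds of Proposition~\ref{Main_bound_Lem} and the $x$-adapted compacta~\eqref{eq:K} are needed, and where this statement genuinely differs from its analogue in~\cite{Cannarsa-Cheng3}. Once the a priori compactness of the first paragraph is in place, the semiconcavity bound, the interpolation to $C^{1,1}_{\mathrm{loc}}$, and the three first-variation identities are routine; the one delicate ingredient, the quadratic convexity estimate~\eqref{eq:convextity_local}, has already been supplied by Proposition~\ref{convexity_A_t}.
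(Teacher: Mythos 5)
Your proof is correct and follows what is, to the best of my knowledge, essentially the argument the paper itself appeals to. The paper does not give a self-contained proof of Proposition~\ref{C11_A_t}: it states that the proof is ``similar to those in \cite{Cannarsa-Cheng3}'', with the key difference being that the a priori compacta $\mathbf{K}_x$, $\mathbf{K}^*_x$ now depend on $x$. Your three-step scheme --- (i) establish local semiconcavity of $(t,y)\mapsto A_t(x,y)$ with linear modulus by testing against an explicit interpolating competitor whose action is a $C^2$ function of the parameters on the $x$-adapted compacta, (ii) combine this with the local semiconvexity from Proposition~\ref{convexity_A_t} and the classical equivalence ``semiconcave $+$ semiconvex with linear modulus $\Longleftrightarrow$ $C^{1,1}_{\mathrm{loc}}$'' from Cannarsa--Sinestrari, (iii) identify the derivatives by first variation along the unique minimizer, with uniqueness forced by differentiability of $A_t(x,\cdot)$ plus backward uniqueness of the Hamiltonian flow --- is precisely this standard route, and your use of the time-reversed Lagrangian $\check L(z,v)=L(z,-v)$ to transfer the result to $A_t(y,x)$ is the usual trick. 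The first-variation computations for \eqref{eq:diff_A_t_y}--\eqref{eq:diff_A_t_t}, including the reparametrization $\gamma(\sigma)=\xi(\tfrac{t}{t+h}\sigma)$ and the invocation of energy conservation along the minimizer, are carried out correctly, and the remark that the constants are allowed to depend on the base point $x$ (but must be locally uniform in $(t,y)$) correctly locates where this version of the lemma differs from the compact-manifold or uniformly-bounded settings of the cited works.

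One minor caveat worth making explicit, rather than a gap: the statement writes $S(x,t_0)$ with the non-strict inequality $|y-x|\leqslant R(x,t)$, but $C^{1,1}_{\mathrm{loc}}$ is an interior notion and your argument, like the one in \cite{Cannarsa-Cheng3}, is carried out at interior points of the cone $\{|y-x|<t/2,\ 0<t<t_0\}$. The competitor curve $\gamma_{t,y}$ remains in a neighbourhood of $\mathbf{K}_x$ only for $(t,y)$ in a small neighbourhood of the fixed interior point $(t_*,y_*)$, which is exactly what local semiconcavity requires; you note this, and it suffices.
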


\end{document}